\documentclass[11pt,a4paper]{article}
\usepackage[T1]{fontenc}
\usepackage[utf8]{inputenc}
\usepackage[english]{babel}
\usepackage{lmodern,microtype}
\usepackage[margin=27mm]{geometry}
\usepackage{amsmath,amssymb,amsthm,mathtools}
\usepackage{booktabs,array,longtable}
\usepackage{enumitem}
\usepackage{xurl}
\usepackage{hyperref}
\usepackage{listings}
\hypersetup{hidelinks,pdftitle={A Divisor-Weighted Golomb Sequence: Zeta Asymptotics and Arithmetic Fluctuations},pdfauthor={Marco Mantovanelli}}
\setlist{nosep}
\newtheorem{theorem}{Theorem}[section]
\newtheorem{proposition}[theorem]{Proposition}
\newtheorem{lemma}[theorem]{Lemma}
\newtheorem{corollary}[theorem]{Corollary}
\theoremstyle{definition}
\newtheorem{definition}[theorem]{Definition}
\newtheorem{remark}[theorem]{Remark}
\newcommand{\N}{\mathbb N}
\newcommand{\E}{\mathbb E}
\newcommand{\Prob}{\mathbb P}
\newcommand{\ind}{\mathbf 1}
\newcommand{\Rea}{\operatorname{Re}}
\newcommand{\lcm}{\operatorname{lcm}}
\newcommand{\ph}{\varphi}
\DeclareMathOperator{\Var}{Var}
\numberwithin{equation}{section}
\title{\bfseries A Divisor-Weighted Golomb Sequence:\\
Zeta Asymptotics and Arithmetic Fluctuations}
\author{Marco Mantovanelli\\[0.35em]
\normalsize Independent Researcher, Germany\\
\normalsize\href{mailto:marco@mantovanelli.de}{\texttt{marco@mantovanelli.de}}\\
\normalsize\href{https://orcid.org/0009-0002-0631-293X}{ORCID: 0009-0002-0631-293X}}
\date{}

\begin{document}
\maketitle
\vspace{-1.5em}
\begin{abstract}
We study a divisor-weighted version of Golomb's self-description:
the nondecreasing sequence $A$ in which each positive integer $m$ occurs
$\sum_{d\mid m}A(d)$ times. This rule has a unique solution, and
\[
 A(n)\sim Cn^{\varphi-1},\qquad
 C=\left(\frac{\varphi}{\zeta(\varphi)}\right)^{2-\varphi},
 \qquad \varphi=\frac{1+\sqrt5}{2}.
\]
The golden-ratio exponent of Golomb's sequence survives, while divisor
weighting changes the leading constant.

We prove the asymptotic without assuming regular variation, by a
contraction of upper and lower power envelopes. The argument applies
to every nonnegative integer Dirichlet-convolution kernel $w$ with
$w(1)=1$ and $\sum_q w(q)q^{-\varphi}<\infty$. For the divisor kernel,
a quantitative version gives relative error
$O_\gamma((\log n)^{-\gamma})$ for every $0<\gamma<1$.

Although the global growth is smooth, the normalized run lengths retain
arithmetic fluctuations. A uniform comparison with the divisor sum
$\sum_{d\mid m}d^{1-\varphi}$ transfers them to an explicit random Euler product.
We prove that its law is singular continuous with full support
$[1,\infty)$, determine all real Mellin moments, obtain the sharp
maximal order of the fluctuations, and show that sampling sequence
positions instead of block labels produces the size-biased law.
\end{abstract}

\noindent\textbf{Keywords.} Golomb sequence; self-describing sequence;
Dirichlet convolution; divisor function; asymptotic growth;
arithmetical limiting distribution; size bias.

\noindent\textbf{MSC 2020.} Primary 11B37; Secondary 11N37, 11N60, 11M41.

\section{Introduction}\label{sec:introduction}

A self-describing sequence turns its own values into instructions for
how long those values should persist. In Golomb's classical sequence,
the value $m$ occurs exactly as many times as the $m$th term specifies.
The sequence is therefore the inverse of a counting function that it
generates itself. This feedback explains why a simple integer rule can
produce an irrational growth exponent.

Here the feedback also carries arithmetic information. We replace the
single term prescribing a block length by the sum of the terms at all
divisors of the block label. Prime labels then have only two
contributions, whereas labels with many small prime factors can have
much longer blocks. It is not evident that these irregular local
weights should permit a single global power law. The central result
is that they do: the exponent remains Golomb's, and the arithmetic
weight enters through an explicit value of the Riemann zeta function.
The irregularity does not disappear; it survives in the distribution,
extremes, and correlations of the normalized block lengths.

\subsection{The sequence and its principal asymptotic}

Let $\N=\{1,2,\ldots\}$. We seek a nondecreasing sequence
$A:\N\to\N$ in which every positive integer occurs, and write
\begin{equation}\label{eq:frequency}
 B(m)=\sum_{d\mid m}A(d),\qquad
 F(0)=0,\qquad F(m)=\sum_{j\le m}B(j).
\end{equation}
The self-description is the inverse relation
\begin{equation}\label{eq:inverse}
 A(n)=m\quad\Longleftrightarrow\quad F(m-1)<n\le F(m).
\end{equation}
Thus $n$ is a \emph{position}, $m$ a \emph{value} or \emph{block label},
$B(m)$ a \emph{block length}, and $F(m)$ a \emph{block endpoint}.
We also call $F$ the self-generated clock. The first terms are
\begin{align*}
 A(n):\;&1,2,2,2,3,3,3,4,4,4,4,4,5,5,5,5,6,6,\ldots,\\
 B(m):\;&1,3,3,5,4,8,4,9,7,10,5,14,6,11,11,14,\ldots.
\end{align*}
The block lengths need not be monotone; for instance,
$B(6)=8>B(7)=4$.\footnote{The divisor-weighted sequence studied here
is listed in the OEIS as A400220 \cite{oeis400220}. It differs from the
classical Golomb sequence A001462 \cite{oeis} at the fourth term.}

Throughout the paper, put
\begin{equation}\label{eq:constants}
 \ph=\frac{1+\sqrt5}{2},\qquad
 \alpha=\ph-1=\frac1\ph,\qquad
 K=\frac{\zeta(\ph)}{\ph},\qquad C=K^{-\alpha^2}.
\end{equation}
In particular, $\alpha^2=1-\alpha=2-\ph$ and
$C=0.883418998410228445\ldots$.

\begin{theorem}[Global growth]\label{thm:main}
Equations \eqref{eq:frequency}--\eqref{eq:inverse} define a unique
nondecreasing sequence $A:\N\to\N$ that assumes every positive integer.
Its increments belong to $\{0,1\}$, and
\begin{equation}\label{eq:mainasymptotic}
 A(n)\sim Cn^\alpha,\qquad
 F(m)\sim CKm^\ph=C^{-\ph}m^\ph.
\end{equation}
For every fixed $0<\gamma<1$,
\begin{equation}\label{eq:rateintro}
 A(n)=Cn^\alpha\bigl(1+O_\gamma((\log n)^{-\gamma})\bigr).
\end{equation}
\end{theorem}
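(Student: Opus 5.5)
We will proceed in three stages: existence and uniqueness, the first-order asymptotic via power envelopes, and finally the quantitative rate.

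\emph{Existence, uniqueness, increments.} We construct $A$ recursively. Since $A(1)=1$ forces $B(1)=1$ and $F(1)=1$, we have $A(2)\ge2$. Suppose $A(1),\dots,A(n-1)$ are known and $n\ge2$. Every value $m$ whose block could contain $n$ satisfies $m\le A(n-1)+1\le n$. The quantities $B(m)=\sum_{d\mid m}A(d)$ for $m<n$ involve only indices $d\le m<n$, so they are already determined. To see that $B(n)$ is never needed at position $n$, we check by induction that $A(k)<k$ for $k\ge2$. Hence every $F(m)$ relevant at position $n$ is known, and \eqref{eq:inverse} determines $A(n)$ uniquely. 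Because each $B(m)\ge1$, every block is nonempty. Therefore $A$ is nondecreasing, assumes every positive integer, and has increments in $\{0,1\}$.

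\emph{Envelope contraction.} Write $S(m)=F(m)$. Then
\[
S(m)=\sum_{q\ge1}\sum_{j\le m/q}A(j)=\sum_{q}G(m/q),\qquad G(x)=\sum_{j\le x}A(j).
\]
Now let $\lambda^+=\limsup A(n)/n^\alpha$ and $\lambda^-=\liminf A(n)/n^\alpha$. We first obtain a crude two-sided bound $n^\alpha\ll A(n)\ll n^\alpha$. This follows by comparing $F(m)\ge\sum_{j\le m}A(j)$ with $F(m)\le\sum_q G(m/q)$, using the a priori bounds $A(j)\le j$ and a bootstrap. Given an upper envelope $A(j)\le\Lambda j^\alpha$ for $j\ge j_0$, we get
\[
G(x)\le \frac{\Lambda}{1+\alpha}x^{1+\alpha}+O(1),\qquad 1+\alpha=\ph .
\]
Summing over $q$, with the tail controlled by $\sum_q q^{-\ph}<\infty$ (this is exactly where the kernel hypothesis enters), gives
\[
F(m)\le(1+o(1))\frac{\Lambda\zeta(\ph)}{\ph}m^\ph=(1+o(1))\Lambda K m^\ph .
\]
Inverting via \eqref{eq:inverse}, with $A(n)=m$ forcing $n\le F(m)$, yields the lower envelope $\lambda^-\ge(\Lambda K)^{-\alpha}$. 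Symmetrically, a lower envelope $\lambda$ produces an upper envelope $\le(\lambda K)^{-\alpha}$. Thus
\[
\lambda^-\ge(\lambda^+K)^{-\alpha},\qquad \lambda^+\le(\lambda^-K)^{-\alpha}.
\]
Taking logarithms, $u=\log\lambda^{\pm}$ satisfies the contraction $u\mapsto-\alpha(u+\log K)$ with factor $\alpha<1$. Combining the two inequalities gives
\[
\log\lambda^+-\log\lambda^-\le\alpha^2(\log\lambda^+-\log\lambda^-),
\]
so $\lambda^+=\lambda^-$. The common value is the fixed point $\lambda=(\lambda K)^{-\alpha}$, that is, $\lambda^{1+\alpha}=K^{-\alpha}$, hence $\lambda=K^{-\alpha/\ph}=K^{-\alpha^2}=C$. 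The statement $F(m)\sim CKm^\ph$ then follows from the summation above, and $CK=K^{1-\alpha^2}=K^{\alpha}=C^{-\ph}$.

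\emph{Quantitative rate, the main obstacle.} For \eqref{eq:rateintro} we will run the same scheme with explicit error functions. Suppose that
\[
A(j)=Cj^\alpha\bigl(1+O(\epsilon(j))\bigr)\quad\text{for } j\le N,
\]
with $\epsilon$ slowly decreasing. The errors arising for $F(m)$ are of three kinds: the discrete-summation error, the tail $\sum_{q>Q}q^{-\ph}\asymp Q^{-\alpha}$, and the propagated error $\epsilon$ evaluated at $m/q$. One pass of the map multiplies the propagated relative error by $\alpha$, but it also evaluates $\epsilon$ at scale $n^{\alpha}$ instead of $n$. With $\epsilon(n)=(\log n)^{-\gamma}$ this rescaling gives
\[
\epsilon(n^\alpha)=\alpha^{-\gamma}\epsilon(n),
\]
so the error gain per pass is the factor $\alpha\cdot\alpha^{-\gamma}=\alpha^{1-\gamma}<1$ precisely when $\gamma<1$. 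This explains the restriction on $\gamma$. The delicate point is that the divisor sum averages $\epsilon(m/q)$ over small $m/q$, and these terms must not dominate. We will handle this by truncating at $q\le m^{1-\delta}$, applying the induction hypothesis on the range $m/q\ge m^\delta$, where the hypothesis is used with a slightly worse constant, and absorbing the tail into $m^{-c}$. Closing the induction on a dyadic-in-$\log\log$ scale with $\gamma$-dependent constants should then give \eqref{eq:rateintro}.
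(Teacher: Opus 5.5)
Your overall route is the paper's: causal construction, two-sided power envelopes, the contraction $\lambda^-\ge(\lambda^+K)^{-\alpha}$, $\lambda^+\le(\lambda^-K)^{-\alpha}$, and a quantitative iteration whose gain $\alpha^{1-\gamma}$ is the limiting value of the paper's condition $qb^{-\gamma}<1$ in Theorem~\ref{thm:rate}.

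The genuine gap is the crude bound $n^\alpha\ll A(n)\ll n^\alpha$. You need it to know that $0<\lambda^-\le\lambda^+<\infty$; otherwise both contraction inequalities are vacuous. You only assert it ``by a bootstrap'', and the estimates you write down cannot start one. First, they are asymptotic: they hold for $j\ge j_0$ and carry $(1+o(1))$ factors. Second, they are circular. An upper bound for $A$ at exponent $\alpha$ needs a lower bound for $F$ at exponent $\ph$. That needs a lower bound for $A$ at exponent $\alpha$, which in turn needs the upper bound. Starting from $A(j)\le j$ and alternating the two inversions only moves the exponents along $1,\tfrac12,\tfrac23,\tfrac35,\tfrac58,\dots$. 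These approach $\alpha$ without reaching it.

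The paper breaks the circle with one strong induction on positions at the exact exponent (Lemma~\ref{lem:envelope}):
\begin{itemize}
\item fix $M$ with $M^{1-\alpha}\ge\zeta(\ph)^\alpha$ and $1+(M\ph)^\alpha\le M$, which is possible because $\alpha<1$;
\item use the non-asymptotic inequalities $\lfloor x\rfloor^\ph/\ph\le T_1(x)\le\zeta(\ph)x^\ph$;
\item use $A(n)<n$ for $n\ge3$, so the hypothesis covers every $d\le m=A(n)$.
\end{itemize}
Then $n\le F(m)\le M\zeta(\ph)m^\ph$ and $n>F(m-1)\ge(m-1)^\ph/(M\ph)$ return both bounds with the same $M$. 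Your exponent bootstrap can also be rescued, but only if every bound is kept valid for all $n$. You must then check that the logarithms of the constants obey a recursion with contraction factor below $1$. The constants then stay bounded, and you may let the exponent tend to $\alpha$ for each fixed $n$.

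The rate stage is a plan rather than a proof, and two points must be made explicit to close it.
\begin{itemize}
\item The per-pass factor is not $\alpha$ itself but a Lipschitz constant $q>\alpha$ of $v\mapsto(1+v)^{-\alpha}$ on a small neighborhood of $0$. So the qualitative limit from your second stage must be invoked before linearizing.
\item Your truncation exponent $\delta$ must be close to $1$ (the paper's $\theta$), as your phrase ``slightly worse constant'' implicitly requires. On $d\ge m^\delta$ one only has $\epsilon(d)\le\delta^{-\gamma}\epsilon(m)$, so the condition becomes $q(\alpha\delta)^{-\gamma}<1$, up to an arbitrarily small loss.
\end{itemize}
The discarded range $d<m^\delta$ then costs $O(m^{-\alpha(1-\delta)})$ relatively. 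Replacing $F(A(n))$ by $n$ costs $O(m^{-\alpha})$. With these in place, your strong induction on $n$ is equivalent to the paper's Lemma~\ref{lem:logcontraction} applied to $E(x)=\sup_{n\ge x}|u(n)|$.

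Finally, a small slip: $A(k)<k$ holds only for $k\ge3$, since $A(2)=2$. Your existence step does not need it, because deciding $A(n)$ requires only $F(A(n-1))$, and $A(n-1)\le n-1$.
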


The existence proof is causal: the data needed for a new block are
already present in earlier blocks. The asymptotic proof starts from
two-sided power bounds, not a power-law assumption. If $\ell$ and $L$
are the lower and upper asymptotic coefficients, inversion of the
clock yields $L/\ell\le(L/\ell)^\alpha$. Since $0<\alpha<1$, these
envelopes collapse. This is the decisive self-consistency step.

The same mechanism works for
$B_w(m)=\sum_{d\mid m}w(m/d)A_w(d)$ whenever $w$ is nonnegative and
integer-valued, $w(1)=1$, and
$W(\ph)=\sum_q w(q)q^{-\ph}<\infty$.
Theorem~\ref{thm:kernel} gives the coefficient
$C_w=(\ph/W(\ph))^{2-\ph}$.
The Dirichlet identity kernel recovers Golomb's sequence; the constant
kernel $w=1$ gives \eqref{eq:frequency}. Thus the zeta factor is explained
by a general kernel principle, rather than fitted to numerical data.

\subsection{From smooth growth to arithmetic fluctuations}

Write $\sigma_t(m)=\sum_{d\mid m}d^t$ and $R_m=B(m)/(Cm^\alpha)$.
The second principal result connects these normalized block lengths
to a classical divisor function and distinguishes the two natural
sampling procedures.

\begin{theorem}[Arithmetic fluctuations]\label{thm:arithmetic}
For every fixed $0<\gamma<1$, uniformly as $m\to\infty$,
\begin{equation}\label{eq:Bintro}
 B(m)=C\sigma_\alpha(m)
 \bigl(1+O_\gamma((\log m)^{-\gamma})\bigr).
\end{equation}
Let $(V_p)_p$ be independent random variables indexed by the primes,
with
\[
 \Prob(V_p=j)=(1-p^{-1})p^{-j}\qquad(j=0,1,\ldots),
 \qquad Z=\prod_p\left(\sum_{j=0}^{V_p}p^{-j\alpha}\right).
\]
This product is finite almost surely, with $\E Z=\zeta(\ph)$.
For every bounded continuous $h:[0,\infty)\to\mathbb R$,
\begin{align*}
 \lim_{M\to\infty}\frac1M\sum_{m\le M}h(R_m)
 &=\E[h(Z)],\\
 \lim_{N\to\infty}\frac1N\sum_{n\le N}h(R_{A(n)})
 &=\frac{\E[Zh(Z)]}{\zeta(\ph)}.
\end{align*}
Thus uniform sampling of block labels gives the Euler-product law,
whereas uniform sampling of sequence positions gives its size-biased law.
\end{theorem}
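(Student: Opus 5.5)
The plan has four steps: the uniform comparison \eqref{eq:Bintro}, the limit law for block labels, the finiteness and mean of $Z$, and the size-biased law for positions.

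\textbf{Step 1: the comparison \eqref{eq:Bintro}.} Write $B(m)=\sum_{d\mid m}A(d)=\sum_{d\mid m}A(m/d)$. By \eqref{eq:rateintro}, $A(k)=Ck^\alpha(1+\epsilon_k)$ with $|\epsilon_k|\ll_\gamma(\log(k+1))^{-\gamma}$. Hence
\[
B(m)=C m^\alpha\sum_{d\mid m}d^{-\alpha}(1+\epsilon_{m/d})
= C\sigma_\alpha(m)+C\sum_{d\mid m}(m/d)^\alpha\epsilon_{m/d}.
\]
Put $E=\sum_{d\mid m}(m/d)^\alpha\epsilon_{m/d}$. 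I split $E$ according to whether $m/d\ge\sqrt m$ or $m/d<\sqrt m$.
\begin{itemize}
\item If $m/d\ge\sqrt m$, then $|\epsilon_{m/d}|\ll(\log m)^{-\gamma}$. These terms therefore contribute $O(\sigma_\alpha(m)(\log m)^{-\gamma})$.
\item If $m/d<\sqrt m$, then $|\epsilon_{m/d}|\le c$ for a constant $c$. These terms contribute at most $c\,\tau(m)m^{\alpha/2}$.
\end{itemize}
Since $\sigma_\alpha(m)\ge m^\alpha$ and $\tau(m)=m^{o(1)}$, the second part is $\le c\,m^{\alpha}\cdot m^{-\alpha/2+o(1)}$, which is negligible relative to $\sigma_\alpha(m)(\log m)^{-\gamma}$. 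This proves \eqref{eq:Bintro}.

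Dividing by $Cm^\alpha$ gives
\[
R_m=\sigma_{-\alpha}(m)\bigl(1+o(1)\bigr),\qquad \sigma_{-\alpha}(m)=\sum_{d\mid m}d^{-\alpha},
\]
uniformly in $m$.

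\textbf{Step 2: the law for block labels.} Because the error in Step 1 is uniform, $h(R_m)-h(\sigma_{-\alpha}(m))\to0$ along the relevant values. It therefore suffices to find the limiting distribution of the multiplicative function $\sigma_{-\alpha}(m)=\prod_{p^k\| m}\sum_{j\le k}p^{-j\alpha}$ under uniform $m\le M$. The reduction needs some care: $h$ is only continuous, not uniformly continuous, so I first establish tightness and then restrict to compact sets.

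For a fixed finite set of primes $p\le y$, the exponents $(v_p(m))_{p\le y}$ converge jointly in law to independent geometric variables $(V_p)_{p\le y}$ with the stated distribution. This is the Chinese remainder theorem with densities $p^{-j}(1-p^{-1})$. The large primes must then be controlled. I will bound
\[
\frac1M\sum_{m\le M}\log\prod_{p>y,\ p\mid m}\bigl(1+p^{-\alpha}+\cdots\bigr)\ll\sum_{p>y}p^{-1-\alpha},
\]
which tends to $0$ as $y\to\infty$. This is the Erdős–Wintner style truncation, and it gives convergence in law of $\sigma_{-\alpha}(m)$ to $Z$.

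\textbf{Step 3: finiteness of $Z$ and its mean.} Since $\E\log(\text{factor}_p)\ll p^{-1-\alpha}$ is summable, $\log Z$ converges almost surely, so $Z$ is finite almost surely. For the mean, each factor has expectation
\[
\E\sum_{j\le V_p}p^{-j\alpha}=\sum_j p^{-j\alpha}\,\Prob(V_p\ge j)=\sum_j p^{-j(1+\alpha)}=(1-p^{-\ph})^{-1}.
\]
By independence and monotone convergence, $\E Z=\zeta(\ph)$.

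\textbf{Step 4: the size-biased law for positions.} Group the positions $n\le N$ by block. Block $m$ contributes $B(m)\,h(R_m)$ terms, and $N\approx F(M)$ with $M=A(N)$. Hence
\[
\frac1N\sum_{n\le N}h(R_{A(n)})=\frac{\sum_{m\le M}B(m)h(R_m)+O(B(M))}{F(M)}.
\]
The error $O(B(M))$ is $o(F(M))$ because $B(M)\ll M^{\alpha+o(1)}$. Now write $B(m)=Cm^\alpha R_m$. Partial summation in $m^\alpha$ shows it suffices to prove
\[
\frac1M\sum_{m\le M}R_m h(R_m)\to\E[Zh(Z)].
\]
The main obstacle is that $x\mapsto xh(x)$ is unbounded, so this needs uniform integrability of $R_m$. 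I will obtain it from a second moment bound: $\frac1M\sum_{m\le M}\sigma_{-\alpha}(m)^2$ is bounded, since its Dirichlet series $\sum\sigma_{-\alpha}(m)^2m^{-s}$ behaves like $\zeta(s)\zeta(s+\alpha)^2\zeta(s+2\alpha)/\zeta(2s+2\alpha)$ near $s=1$. With uniform integrability in hand, the mean equals
\[
C\,\frac{M^{1+\alpha}}{1+\alpha}\,\E[Zh(Z)].
\]
Dividing by $F(M)\sim CK M^\ph=C\,\zeta(\ph)M^\ph/\ph$ from Theorem~\ref{thm:main}, and using $1+\alpha=\ph$, gives the limit $\E[Zh(Z)]/\zeta(\ph)$.
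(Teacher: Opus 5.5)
Your proposal is correct and follows essentially the paper's route: a split of the divisor sum using the rate \eqref{eq:rateintro} on large arguments, transfer from $R_m$ to $\sigma_{-\alpha}(m)$ via tightness, a periodic truncation with a uniform mean-tail bound, and, for the size-biased law, exact block counting with weighted Ces\`aro summation and second-moment uniform integrability. In the truncation you cut by primes $p\le y$ on the logarithmic scale, whereas the paper cuts by divisors $d\le D$ in $\sum_{d\mid m}d^{-\alpha}$.

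The one loosely stated step is the second-moment bound. Knowing only how the Dirichlet series behaves near $s=1$ would require a Tauberian theorem. Your exact factorization, however, gives the bound directly: $\sigma_{-\alpha}^2=1*g$, where the Dirichlet series of $g$, namely $\zeta(s+\alpha)^2\zeta(s+2\alpha)/\zeta(2s+2\alpha)$, converges absolutely at $s=1$. Hence $M^{-1}\sum_{m\le M}\sigma_{-\alpha}(m)^2\le\sum_n|g(n)|/n$. Equivalently, you can use the paper's $\lcm$ series.
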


The uniform estimate is proved in Proposition~\ref{prop:uniform};
the two limiting laws are established in
Theorems~\ref{thm:distribution} and~\ref{thm:sizebias}.
The comparison $R_m\sim\sigma_{-\alpha}(m)$ holds along \emph{all}
integers, not merely on average. In particular,
\begin{equation}\label{eq:liminfintro}
 \liminf_{m\to\infty}R_m=1,\qquad
 \limsup_{m\to\infty}R_m=\infty,
\end{equation}
and the extremes satisfy
\begin{equation}\label{eq:maxintro}
 \log\max_{m\le x}R_m
 \sim\frac{(\log x)^{1-\alpha}}{(1-\alpha)\log\log x}.
\end{equation}
The Euler-product law is singular continuous with full support
$[1,\infty)$ and finite Mellin moments of every real order.
These refinements and the maximal order describe the fluctuations
identified by Theorem~\ref{thm:arithmetic}.

The point is not that the divisor function itself is new.
After \eqref{eq:Bintro}, several results in
Sections~\ref{sec:fluctuations}--\ref{sec:correlations} are classical
multiplicative-number-theoretic consequences transferred through a
uniform comparison. They are not presented as new general results
about divisor functions. The contribution is the bridge from a
nonmultiplicative, self-referential sequence to that arithmetic.
In addition, position sampling is governed by the sequence's own
clock: identifying its size-biased limit requires the weighted
summation and incomplete-block arguments, not just a limiting law
for $\sigma_{-\alpha}(m)$.

\subsection{Relation to earlier work and organization}

Golomb introduced the unweighted self-description in Problem~5407
\cite{golomb}. The published solutions contain a heuristic argument by
Marcus \cite{marcus} and a proof by Fine \cite{fine}. Vardi studied the error term
\cite{vardi}; the subsequent work of P\'etermann and R\'emy
\cite{petermannremy} describes a finer antiperiodic oscillation on the
$\log\log n$ scale.
These results concern the unweighted rule $B(m)=A(m)$.
Our kernel theorem includes its first-order asymptotic, but does not
subsume its sharper error-term theory.

Recent variants illustrate other ways to change self-description.
Cloitre's almost Golomb sequences impose finite-window frequency
conditions \cite{cloitre}; Claes and Miyamoto study Golombic and Levine
sequence operators \cite{claes}. In contrast, our multiplicities are
formed by Dirichlet convolution, with all positive block labels retained.
The recurrence in Appendix~\ref{app:structure} is value-dependent, but
the construction is not a fixed-order Hofstadter sum recursion.

The distributional viewpoint belongs to probabilistic number theory:
the logarithm of a positive multiplicative function is additive, and
its prime components suggest independent random variables
\cite{erdoswintner}. We supply direct truncation and moment proofs for
the particular divisor function needed here. Size bias is a general
sampling principle \cite{arratia}; the additional issue here is the
self-generated weighting and the passage through an incomplete final
block. Classical divisor identities and the prime number theorem are
used in their standard forms \cite{dlmf}.

Section~\ref{sec:construction} establishes causality.
Section~\ref{sec:kernel} proves the kernel theorem, and
Section~\ref{sec:rate} gives the quantitative refinement.
Section~\ref{sec:fluctuations} develops the divisor comparison, maximal
order, and Euler-product law.
Section~\ref{sec:sampling} studies position sampling and arithmetic
densities; Section~\ref{sec:correlations} treats dependence between
blocks. Section~\ref{sec:conclusion} closes with further questions.
Additional exact identities and the reproducible finite computations
are collected in the appendices.

\section{Causal construction}\label{sec:construction}

\begin{proposition}[Existence and uniqueness]\label{prop:construction}
There is exactly one sequence satisfying
\eqref{eq:frequency}--\eqref{eq:inverse}. Its initial two blocks are
$1$ and $2,2,2$. For every $m\ge3$, the values $A(d)$ with $d\mid m$
are already known before block $m$ is appended.
\end{proposition}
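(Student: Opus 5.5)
The plan is to build $A$ block by block and show that at each stage the rule leaves no freedom. Because $A$ is nondecreasing, hits every positive integer, and satisfies \eqref{eq:inverse}, it is the concatenation of the blocks $1^{B(1)},2^{B(2)},3^{B(3)},\ldots$. Here block $m$ consists of $B(m)$ copies of $m$ and fills the positions $F(m-1)<n\le F(m)$. So $A$ is determined by the sequence $(B(m))_{m\ge1}$, and conversely. A solution is therefore the same thing as a sequence of positive integers $B(m)$ for which the resulting concatenation satisfies $B(m)=\sum_{d\mid m}A(d)$ for all $m$. I will prove existence and uniqueness together by induction on $m$. The inductive claim is that $B(1),\dots,B(m)$ are forced and that $F(m)\ge m$. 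The second part guarantees that every $A(d)$ with $d\le m$ has already been written once blocks $1,\dots,m$ are in place.

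For the small cases, block $1$ must satisfy $B(1)=A(1)$. Since $A$ is nondecreasing, hits every positive integer and starts at a positive value, we get $A(1)=1$, so $B(1)=1$ and block $1$ is the single term $1$. Then $F(1)=1$, the next block has label $2$, and $A(2)=2$. This gives $B(2)=A(1)+A(2)=3$, so block $2$ is $2,2,2$. Now $F(2)=4\ge 3$, and in particular $A(3)=2$ is already known. This is exactly the point where self-reference could have caused trouble, and it is why $m=1,2$ are treated separately in the statement.

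For the inductive step, fix $m\ge3$ and suppose blocks $1,\dots,m-1$ are uniquely determined with $F(m-1)\ge m-1$. Each block has length $B(j)\ge A(j)\ge1$ because $j\mid j$. Moreover $B(j)\ge A(1)+A(j)\ge2$ for $j\ge2$. Hence
\[
F(m-1)=\sum_{j\le m-1}B(j)\ge 1+2(m-2)=2m-3\ge m \qquad (m\ge3).
\]
So every position $d\le m$ has already been filled, and each $A(d)$ with $d\mid m$ is a known value determined by blocks $1,\dots,m-1$. It follows that $B(m)=\sum_{d\mid m}A(d)$ is forced. Appending $B(m)$ copies of $m$ gives the unique continuation, and it keeps $F(m)\ge m$. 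Since the $A(d)$ are fixed at this point, appending block $m$ cannot alter them, so the definition is consistent and not circular.

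To finish, the sequence built this way is nondecreasing, takes every value $m$, and satisfies \eqref{eq:inverse} by construction. It also satisfies \eqref{eq:frequency}, because each $A(d)$ used in computing $B(m)$ equals the final value of the sequence at position $d$. Uniqueness follows since any solution must agree with this one block by block. The main obstacle is the causality inequality $F(m-1)\ge m$, and the most delicate point is the base case $m=2$, where $A(2)$ appears in its own block. The lower bound $B(j)\ge2$ for $j\ge2$ handles every $m\ge3$ cleanly.
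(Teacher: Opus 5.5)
Your proof is correct and follows essentially the same route as the paper. You force the first two blocks, then use $F(m-1)\ge 1+2(m-2)=2m-3\ge m$ for $m\ge3$ to see that every divisor position is already filled, so each block is forced and appending it alters nothing; the only cosmetic difference is that you obtain $B(j)\ge2$ from the two divisors $1$ and $j$, whereas the paper uses $A(j)\ge2$ for $j\ge2$.
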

\begin{proof}
Set $A(1)=1$. The first block has length $B(1)=1$. The next position
must therefore carry the value $2$, which fixes $A(2)=2$ and gives
$B(2)=A(1)+A(2)=3$. Hence $F(2)=4$.

Suppose the blocks up to $m-1$ have been constructed. Every block
$j\ge2$ has length at least $A(j)\ge2$, so
\begin{equation}\label{eq:causality}
 F(m-1)\ge1+2(m-2)=2m-3\ge m\qquad(m\ge3).
\end{equation}
Thus position $m$, and all its divisor positions, are already defined.
The finite positive integer $B(m)=\sum_{d\mid m}A(d)$ can be calculated,
and appending exactly $B(m)$ copies of $m$ extends the sequence without
altering any previous term. The resulting endpoints tend to infinity,
so every position is eventually assigned. Each block is nonempty and
finite. This proves existence, slow growth, and the occurrence of every
positive integer.

In any solution the initial two blocks are forced. The same argument
then forces every subsequent block, proving uniqueness.
\end{proof}

Interchanging two finite sums gives the useful endpoint formula
\begin{equation}\label{eq:floor}
 F(m)=\sum_{d\le m}A(d)\left\lfloor\frac md\right\rfloor.
\end{equation}
In particular, $A(F(m))=m$ and $A(F(m)+1)=m+1$.

The divisor identity also retains exact arithmetic information.

\begin{proposition}[Primes and divisibility chains]\label{prop:primecode}
If $m\mid n$ and $m<n$, then $B(m)<B(n)$. Moreover, for every $m\ge2$,
\begin{equation}\label{eq:primecode}
 m\text{ is prime}\quad\Longleftrightarrow\quad B(m)=A(m)+1.
\end{equation}
If $m$ is composite, then $B(m)-A(m)\ge3$; in particular, the difference
$B(m)-A(m)$ is never $2$.
\end{proposition}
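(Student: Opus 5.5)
The argument needs only three properties of $A$ from Proposition~\ref{prop:construction} and Theorem~\ref{thm:main}: $A$ is nondecreasing, $A(1)=1$, and $A(d)\ge2$ for every $d\ge2$ (because $A(2)=2$). Strict monotonicity of $A$ is neither available nor needed. For the divisibility claim, let $m\mid n$ with $m<n$. Every divisor of $m$ is a divisor of $n$, and $n$ itself divides $n$ but not $m$. Hence
\[
 B(n)-B(m)=\sum_{d\mid n,\ d\nmid m}A(d)\ \ge\ A(n)\ \ge\ 1,
\]
since all terms are nonnegative; in fact the difference is at least $2$ because $n\ge2$. This gives $B(m)<B(n)$ at once.

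For the prime criterion, write
\[
 B(m)-A(m)=\sum_{d\mid m,\ d<m}A(d)=A(1)+\sum_{d\mid m,\ 1<d<m}A(d)=1+\sum_{d\mid m,\ 1<d<m}A(d).
\]
If $m\ge2$ is prime, the last sum is empty, so $B(m)=A(m)+1$. If $m$ is composite, it has a proper divisor $d$ with $1<d<m$. Every such $d$ satisfies $A(d)\ge A(2)=2$ by monotonicity, so $B(m)-A(m)\ge1+2=3$. This proves the reverse direction of \eqref{eq:primecode} and the bound $B(m)-A(m)\ge3$ for composite $m$.

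Putting the cases together, $B(m)-A(m)$ equals $1$ when $m$ is prime and is at least $3$ when $m$ is composite. It is therefore never $2$ for $m\ge2$, and it is also never $2$ for $m=1$, where the difference is $0$. There is no substantive obstacle; the only point needing care is $A(d)\ge2$ for $d\ge2$, which follows from $A(2)=2$ together with monotonicity rather than from the construction directly.
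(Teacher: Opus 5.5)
Your proof is correct and follows the paper's argument essentially verbatim: you use the same divisor-subset difference $B(n)-B(m)=\sum_{d\mid n,\,d\nmid m}A(d)>0$ for the chain inequality, and the same identity $B(m)-A(m)-1=\sum_{d\mid m,\,1<d<m}A(d)$, whose sum is empty exactly for primes and otherwise has every term at least $2$. Your explicit justification of $A(d)\ge2$ for $d\ge2$ (via $A(2)=2$ and monotonicity) and your separate check of $m=1$ are small details that the paper leaves implicit.
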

\begin{proof}
When $m$ properly divides $n$, every divisor of $m$ is a divisor of $n$,
and $n$ itself occurs only in the latter divisor sum. Hence
\[
 B(n)-B(m)=\sum_{\substack{d\mid n\\d\nmid m}}A(d)>0.
\]
For $m\ge2$,
\[
 B(m)-A(m)-1=
 \sum_{\substack{d\mid m\\1<d<m}}A(d).
\]
The sum is empty exactly when $m$ is prime. If it is nonempty, each of
its terms is at least $2$, proving the stronger composite bound.
\end{proof}

\section{A Dirichlet-kernel theorem}\label{sec:kernel}
The growth proof becomes clearer in a class that includes both the
classical Golomb sequence and the divisor-weighted sequence.

\begin{definition}
Let $w:\N\to\mathbb Z_{\ge0}$ satisfy $w(1)=1$. A
\emph{$w$-weighted self-describing sequence} has multiplicities
\begin{equation}\label{eq:kernel}
 B_w(m)=\sum_{d\mid m}w(m/d)A_w(d),\qquad
 F_w(m)=\sum_{j\le m}B_w(j),
\end{equation}
and is the increasing-block inverse of $F_w$ as in
\eqref{eq:inverse}. Write
\begin{equation}\label{eq:W}
 W(s)=\sum_{q\ge1}\frac{w(q)}{q^s}
\end{equation}
where the series converges.
\end{definition}

\begin{theorem}[Finite Dirichlet mass]\label{thm:kernel}
For every such $w$, the weighted sequence exists uniquely. If
$W(\ph)<\infty$, then
\begin{equation}\label{eq:kernelasymptotic}
 A_w(n)\sim C_w n^\alpha,\qquad
 C_w=\left(\frac{\ph}{W(\ph)}\right)^{\alpha^2},
\end{equation}
and
\begin{equation}\label{eq:Fkernel}
 F_w(m)\sim \frac{C_w W(\ph)}{\ph}m^\ph.
\end{equation}
Neither monotonicity nor boundedness of the multiplicities is assumed.
\end{theorem}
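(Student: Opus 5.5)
The plan is to prove existence by the causal argument of Proposition~\ref{prop:construction}, and then to prove the asymptotic by sandwiching $A_w(n)/n^\alpha$ between its lower and upper limits, which the inverse relation~\eqref{eq:inverse} forces to coincide.

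\emph{Existence and uniqueness.} Since $w(1)=1$ and $w\ge0$, we have $B_w(m)\ge A_w(m)$, and also $B_w(m)\ge A_w(1)+A_w(m)$ whenever $w(m)\ge1$. Either way $B_w(j)\ge A_w(j)\ge2$ for $j\ge2$. So the bound~\eqref{eq:causality} holds verbatim, the first two blocks are again forced, and the argument of Proposition~\ref{prop:construction} carries over unchanged. Integrality of $w$ guarantees that each block length is a positive integer. As before, the increments of $A_w$ lie in $\{0,1\}$, and $A_w(n)\le n$.

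\emph{Contraction of envelopes.} Put $\ell=\liminf A_w(n)n^{-\alpha}$ and $L=\limsup A_w(n)n^{-\alpha}$, and write $S(x)=\sum_{d\le x}A_w(d)$. Interchanging sums as in~\eqref{eq:floor} gives
\[
 F_w(m)=\sum_{q\le m}w(q)\,S(m/q).
\]
Assume for now that $0<\ell\le L<\infty$. Since $1+\alpha=\ph$, we get $S(x)\le(L+\varepsilon)x^\ph/\ph+O_\varepsilon(1)$ and $S(x)\ge(\ell-\varepsilon)x^\ph/\ph-O_\varepsilon(1)$. For every $q$ we also have the uniform majorant $S(m/q)\le c\,(m/q)^\ph$. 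Because $\sum_q w(q)q^{-\ph}<\infty$, dominated convergence in $q$ applies. This dominated convergence is the only place where $W(\ph)<\infty$ enters, and neither monotonicity nor boundedness of $w$ is needed. The $O(1)$ errors contribute at most $O\bigl(\sum_{q\le m}w(q)\bigr)=O(m^\ph)\cdot o(1)$, again by $W(\ph)<\infty$. Hence
\[
 \frac{\ell W(\ph)}{\ph}\le\liminf_{m\to\infty}\frac{F_w(m)}{m^\ph}\le\limsup_{m\to\infty}\frac{F_w(m)}{m^\ph}\le\frac{LW(\ph)}{\ph}.
\]
Now invert. If $A_w(n)=m$, then $F_w(m-1)<n\le F_w(m)$. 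This yields
\[
 \ell\ge\bigl(\ph/(LW(\ph))\bigr)^\alpha,\qquad L\le\bigl(\ph/(\ell W(\ph))\bigr)^\alpha .
\]
Dividing the two inequalities gives $L/\ell\le(L/\ell)^\alpha$, hence $L=\ell$ because $0<\alpha<1$. Then $L^{1+\alpha}=(\ph/W(\ph))^\alpha$, so $L=(\ph/W(\ph))^{\alpha/\ph}=(\ph/W(\ph))^{\alpha^2}=C_w$, which is~\eqref{eq:kernelasymptotic}. Feeding $\ell=L=C_w$ back into the display gives~\eqref{eq:Fkernel}.

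\emph{A priori bounds (main obstacle).} The step I expect to be hardest is proving $0<\ell\le L<\infty$ without circularity. The direction $\ell>0$ follows from $L<\infty$: the majorant $S(x)\le xA_w(x)\le Kx^\ph$ gives $F_w(m)\le KW(\ph)m^\ph$, and inversion gives $A_w(n)\ge c\,n^\alpha$. For $L<\infty$ I would first try a joint strong induction on $n$ with constants $0<c<K$. Suppose
\[
 c\,d^\alpha\le A_w(d)\le K d^\alpha\quad\text{for all }d<n .
\]
Dropping all $q\ge2$ gives $F_w(m)\ge S(m)\ge c\,m^\ph/\ph-O(1)$, and inversion then yields $A_w(n)\le(\ph/c)^\alpha n^\alpha+O(1)$. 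The upper half propagates provided $(\ph/c)^\alpha<K$. The lower half propagates via the majorant above provided $(1/(KW(\ph)))^\alpha>c$. The inductive indices are legitimate because $m=A_w(n)\le n$, and only values $A_w(d)$ with $d\le m$ are used.

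Writing $V=W(\ph)$, the two requirements read $c<(KV)^{-\alpha}$ and $K>\ph^\alpha c^{-\alpha}$. A compatible pair exists because the feedback exponent $\alpha^2<1$. Taking $c=\lambda^{\alpha}$ and $K=\lambda^{-1}$ with $\lambda$ small, the first condition becomes $\lambda^{\alpha}<\lambda^{\alpha}V^{-\alpha}$, and the second becomes $\lambda^{-1}>\ph^\alpha\lambda^{-\alpha^2}$. The second holds for small $\lambda$ since $\alpha^2<1$. The first, however, forces $V<1$, which is false because $V\ge w(1)=1$. So this literal choice of constants does not close, and I would retune the ratio $K/c$ against $V$. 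The additive $O(1)$ losses and the base case are then absorbed by taking $n_0$ large and adjusting constants on the finite initial range. If the constants cannot be closed this way, the fallback is to run the same induction on exponents: first show $A_w(n)=n^{\alpha+o(1)}$, and then upgrade to two-sided power bounds.
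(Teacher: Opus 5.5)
Your causal existence argument and your collapse step coincide with the paper's proof. That step derives $\ell\ge(\ph/(LW(\ph)))^\alpha$ and $L\le(\ph/(\ell W(\ph)))^\alpha$, hence $L/\ell\le(L/\ell)^\alpha$. But the proposal never establishes the a priori bounds $0<\ell\le L<\infty$, and everything else depends on them. The uniform majorant $S(x)\ll x^\ph$ is what licenses dominated convergence in $q$, and the inversion and the division by $\ell$ need $\ell>0$ and $L<\infty$. Your one explicit choice, $c=\lambda^\alpha$ and $K=\lambda^{-1}$, sets $c$ exactly equal to $K^{-\alpha}$. That leaves no room for the factor $W(\ph)^{-\alpha}\le1$ in the lower condition, and you then stop at ``retune'' or an unspecified induction on exponents. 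As submitted, this is a genuine gap at the step you yourself flagged as the main obstacle.

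The gap closes in one line inside your own framework. Write $V=W(\ph)$ and handle the additive loss exactly: $n>F_w(m-1)\ge c(m-1)^\ph/\ph$ gives $m<1+(\ph n/c)^\alpha\le\bigl(1+(\ph/c)^\alpha\bigr)n^\alpha$. The joint induction therefore closes as soon as $c\le(KV)^{-\alpha}$ and $K\ge1+(\ph/c)^\alpha$. The paper (Lemma~\ref{lem:envelope}) takes $c=1/K$. The conditions then become $K^{1-\alpha}\ge V^\alpha$ and $1+(\ph K)^\alpha\le K$, both valid for all large $K$ because $\alpha<1$. Equivalently, $c=(KV)^{-\alpha}$ turns the second condition into $K\ge1+\ph^\alpha(KV)^{\alpha^2}$, and this is where your remark that $\alpha^2<1$ really enters. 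The base cases $n=1,2$ are immediate for $K\ge2$, so no adjustment on a larger initial range is needed, and the fallback is unnecessary.

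One further correction: the inductive step needs $m=A_w(n)<n$ for $n\ge3$, not merely $A_w(n)\le n$, so that every index $d\le m$ used in the step lies strictly below $n$. This holds because $A_w(2)=A_w(3)=2$ and the increments lie in $\{0,1\}$.
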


\subsection{Existence and comparison sums}
For $w(1)=1$, block $1$ still consists of a single $1$, and
$B_w(2)=2+w(2)\ge2$. Every later block has length at least $A_w(m)$.
Since the first block has length one, $A_w(j)\ge2$ for $j\ge2$.
Thus $F_w(m-1)\ge1+2(m-2)\ge m$ for $m\ge3$, so every required
divisor position is already known. This proves existence and uniqueness
as in Proposition~\ref{prop:construction}. It also proves
\begin{equation}\label{eq:strict}
 A_w(n)<n\qquad(n\ge3).
\end{equation}

For $x\ge1$, put
\begin{equation}\label{eq:Tw}
 T_w(x)=\sum_{qd\le x}w(q)d^\alpha.
\end{equation}
The inequalities
\begin{equation}\label{eq:Tbounds}
 \frac{\lfloor x\rfloor^\ph}{\ph}
 \le T_w(x)\le W(\ph)x^\ph
\end{equation}
follow from the $q=1$ summand, integration, and
$\sum_{d\le y}d^\alpha\le y^\ph$. Since
\[
 \sum_{d\le y}d^\alpha\sim\frac{y^\ph}{\ph},
\]
dominated convergence in the sum over $q$ gives
\begin{equation}\label{eq:Tlimit}
 T_w(x)\sim K_w x^\ph,
 \qquad K_w=\frac{W(\ph)}{\ph}.
\end{equation}
The domination is $w(q)q^{-\ph}$, which is summable by hypothesis.

We will also need a consequence that excludes a hidden contribution
from finitely many small divisor positions:
\begin{equation}\label{eq:wsmall}
 \sum_{q\le x}w(q)=o(x^\ph).
\end{equation}
Indeed, after division by $x^\ph$, the contribution from $q\le Q$ is
$O_Q(x^{-\ph})$, while the remaining contribution is at most
$\sum_{q>Q}w(q)q^{-\ph}$. Let $x\to\infty$ and then $Q\to\infty$.

\subsection{Two-sided power bounds without an ansatz}
\begin{lemma}\label{lem:envelope}
If $W(\ph)<\infty$, there is $M\ge2$, depending on $w$, such that
\begin{equation}\label{eq:envelope}
 M^{-1}n^\alpha\le A_w(n)\le Mn^\alpha
 \qquad(n\ge1).
\end{equation}
\end{lemma}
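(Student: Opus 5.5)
The plan is a simultaneous strong induction on $n$ for both inequalities in \eqref{eq:envelope}. The two bounds feed each other through the inverse relation \eqref{eq:inverse} and the comparison sums \eqref{eq:Tbounds}. The point that makes the induction close is that the exponent $\alpha<1$ damps the constants.

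\emph{Reduction to block endpoints.} The endpoint formula \eqref{eq:floor} generalizes by the same interchange of sums to
\[
 F_w(m)=\sum_{qd\le m}w(q)A_w(d).
\]
This sum involves only $A_w(d)$ with $d\le m$. Suppose $A_w(d)\le M d^\alpha$ for all $d\le m$. Then \eqref{eq:Tbounds} gives $F_w(m)\le M\,T_w(m)\le M\,W(\ph)\,m^\ph$. Similarly, the lower bound $A_w(d)\ge M^{-1}d^\alpha$ for $d\le m-1$ gives
\[
 F_w(m-1)\ge M^{-1}T_w(m-1)\ge \frac{(m-1)^\ph}{\ph M}.
\]

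\emph{Inductive step.} Fix $n\ge3$ and set $m=A_w(n)$. By \eqref{eq:strict}, $m<n$, so every $A_w(d)$ entering $F_w(m)$ or $F_w(m-1)$ has $d<n$ and is covered by the induction hypothesis. This causality is the only place where the strict inequality \eqref{eq:strict} is needed.
\begin{itemize}
\item From $n\le F_w(m)\le MW(\ph)m^\ph$ we get $m\ge (MW(\ph))^{-\alpha}n^\alpha$, using $1/\ph=\alpha$.
\item From $n>F_w(m-1)\ge (m-1)^\ph/(\ph M)$ we get $m\le(\ph M)^\alpha n^\alpha+1\le\bigl((\ph M)^\alpha+1\bigr)n^\alpha$.
\end{itemize}
Note that $m\ge2$ for $n\ge2$, so the lower estimate for $F_w(m-1)$ is meaningful.

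\emph{Closing the constants.} It remains to choose $M$ so that both conclusions reproduce the hypothesis. The two conditions are
\[
 (MW(\ph))^{-\alpha}\ge M^{-1}
 \quad\Longleftrightarrow\quad M^{1-\alpha}\ge W(\ph)^\alpha,
\]
and
\[
 (\ph M)^\alpha+1\le M.
\]
Both hold for all sufficiently large $M$, because $0<\alpha<1$ and $W(\ph)<\infty$. Enlarging $M$ further covers the base cases $n=1,2$, where $A_w(1)=1$ and $A_w(2)=2$.

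I expect the only delicate point to be making the induction genuinely causal. One must check that the bounds for $F_w(m)$ use only indices strictly below $n$; this is exactly what \eqref{eq:strict} provides. Beyond that, one must absorb the additive $+1$ from the incomplete final block. This works because $n^\alpha\ge1$, so the $+1$ costs only an additive constant in the coefficient.
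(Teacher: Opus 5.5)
Your proposal is correct and takes essentially the same route as the paper. It runs a simultaneous strong induction on positions, uses \eqref{eq:strict} for causality, inverts $n\le F_w(m)\le MW(\ph)m^\ph$ and $n>F_w(m-1)\ge (m-1)^\ph/(M\ph)$, and closes with the same two conditions $M^{1-\alpha}\ge W(\ph)^\alpha$ and $1+(M\ph)^\alpha\le M$.
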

\begin{proof}
Choose $M$ large enough that
\begin{equation}\label{eq:Mchoice}
 M^{1-\alpha}\ge W(\ph)^\alpha,
 \qquad 1+(M\ph)^\alpha\le M.
\end{equation}
The first condition is exactly
$(MW(\ph))^{-\alpha}\ge M^{-1}$, which will close the lower induction
bound; the second closes the upper bound. Both hold for all sufficiently
large $M$ because $\alpha<1$. The initial cases $n=1,2$
are immediate. Suppose the bounds hold at every index below $n\ge3$,
and let $m=A_w(n)$. By \eqref{eq:strict}, all values $A_w(d)$ with
$d\le m$ are covered by the induction hypothesis. Hence
\[
 n\le F_w(m)\le M T_w(m)\le M W(\ph)m^\ph,
\]
which yields
\[
 m\ge(MW(\ph))^{-\alpha}n^\alpha\ge M^{-1}n^\alpha.
\]
For the upper bound use only the $q=1$ contribution:
\[
 n>F_w(m-1)\ge M^{-1}\sum_{d\le m-1}d^\alpha
 \ge\frac{(m-1)^\ph}{M\ph}.
\]
Consequently,
\[
 m<1+(M\ph n)^\alpha
 \le\bigl(1+(M\ph)^\alpha\bigr)n^\alpha\le Mn^\alpha.
\]
This completes a strong induction on positions, rather than a formal
induction on the still-unknown growth law.
\end{proof}

\subsection{Collapse of the asymptotic envelopes}
\begin{proof}[Proof of Theorem~\ref{thm:kernel}]
Existence has been established. Define
\[
 \ell=\liminf_{n\to\infty}\frac{A_w(n)}{n^\alpha},
 \qquad
 L=\limsup_{n\to\infty}\frac{A_w(n)}{n^\alpha}.
\]
Lemma~\ref{lem:envelope} gives $0<\ell\le L<\infty$.
For any $\varepsilon>0$, all sufficiently large $d$ satisfy
\[
 (\ell-\varepsilon)d^\alpha\le A_w(d)
 \le(L+\varepsilon)d^\alpha.
\]
For every fixed $D$, equation \eqref{eq:wsmall} gives explicitly
\[
 \sum_{d\le D}A_w(d)\sum_{q\le m/d}w(q)=o(m^\ph).
\]
The same estimate holds with $A_w(d)$ replaced by $d^\alpha$.
Thus omitting these finitely many divisor positions changes both
the clock and its comparison sum by $o(m^\ph)$.
Equation \eqref{eq:Tlimit} now implies
\begin{equation}\label{eq:Fenvelopes}
 \ell K_w\le\liminf_m\frac{F_w(m)}{m^\ph}
 \le\limsup_m\frac{F_w(m)}{m^\ph}\le LK_w.
\end{equation}
To make the inversion explicit, fix $0<\varepsilon<\ell K_w$.
For all sufficiently large $n$, with $m=A_w(n)$,
\[
 (\ell K_w-\varepsilon)(m-1)^\ph<n
 \le(LK_w+\varepsilon)m^\ph.
\]
The threshold is uniform because the power envelope gives $m\to\infty$.
Hence
\[
 (LK_w+\varepsilon)^{-\alpha}
 \le\frac{A_w(n)}{n^\alpha}
 <(\ell K_w-\varepsilon)^{-\alpha}+n^{-\alpha}.
\]
Taking lower and upper limits, then letting $\varepsilon\downarrow0$, gives
\begin{equation}\label{eq:contraction}
 \ell\ge(LK_w)^{-\alpha},\qquad
 L\le(\ell K_w)^{-\alpha}.
\end{equation}
Dividing the second bound by the first shows
\[
 1\le\frac L\ell\le\left(\frac L\ell\right)^\alpha.
\]
Because $0<\alpha<1$, this forces $L=\ell=c$. Both inequalities
in \eqref{eq:contraction} then force
$c=(cK_w)^{-\alpha}$. Solving gives
\[
 c=K_w^{-\alpha/(1+\alpha)}=K_w^{-\alpha^2}=C_w.
\]
Finally, substituting the resulting limit into
\eqref{eq:Fenvelopes} proves \eqref{eq:Fkernel}.
\end{proof}

\begin{remark}[Why the exponent of the constant matters]
Inverting a power law gives the relation
$c=(\ph/(cW(\ph)))^\alpha$, not
$c=(\ph/W(\ph))^\alpha$. The unknown leading coefficient occurs
on both sides. Its correct exponent is
$\alpha/(1+\alpha)=\alpha^2=2-\ph$.
\end{remark}

\subsection{Examples and a universality statement}
For $w(q)=1$, $W(\ph)=\zeta(\ph)$, proving the growth part of
Theorem~\ref{thm:main}. For the Dirichlet identity kernel
$w(q)=\ind_{\{q=1\}}$, we recover Golomb's constant
$\ph^{2-\ph}$. If $d_r$ denotes the coefficients of $\zeta(s)^r$,
with $d_0$ the identity kernel, then
\begin{equation}\label{eq:drfamily}
 B_r=d_r*A_r
 \quad\Longrightarrow\quad
 A_r(n)\sim
 \left(\frac{\ph}{\zeta(\ph)^r}\right)^{2-\ph}n^{\ph-1}
 \qquad(r=0,1,2,\ldots).
\end{equation}
Thus the same exponent persists through every finite number of divisor
convolutions. Another example is the squarefree kernel $w(q)=\mu(q)^2$,
for which
\begin{equation}\label{eq:squarefree}
 C_w=\left(\frac{\ph\zeta(2\ph)}{\zeta(\ph)}\right)^{2-\ph}.
\end{equation}
The Dirichlet-series identities used here are classical
\cite{dlmf}. The kernel theorem identifies how their values enter a
nonlinear self-describing fixed point. It does not apply to a kernel
with $W(\ph)=\infty$.

\section{A near-logarithmic convergence rate}\label{sec:rate}
We return to the divisor kernel $w=1$. The qualitative contraction from
Section~\ref{sec:kernel} can be made quantitative in two steps: first
control the error of the clock, then transfer it through inversion.
The constants in the estimates below are uniform in the position or
block label; they may depend on the fixed parameters.

\begin{lemma}[Comparison-sum remainder]\label{lem:Terror}
As $x\to\infty$,
\begin{equation}\label{eq:Terror}
 T_1(x)=Kx^\ph+O(x).
\end{equation}
\end{lemma}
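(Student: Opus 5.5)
We need $T_1(x)=\sum_{qd\le x} d^\alpha = Kx^\ph + O(x)$, where $K=\zeta(\ph)/\ph$. Write $T_1(x)=\sum_{q\le x} S(x/q)$ with $S(y)=\sum_{d\le y}d^\alpha$.

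\emph{Step 1 (inner sum).} By Euler--Maclaurin, or simply by comparing the monotone sum with its integral, we have
\[
S(y)=\frac{y^\ph}{\ph}+O(y^\alpha)\qquad(y\ge1).
\]
Indeed, $0\le \sum_{d\le y}d^\alpha-\int_0^{\lfloor y\rfloor}t^\alpha\,dt\le \lfloor y\rfloor^\alpha$. Moreover, the gap $\int_{\lfloor y\rfloor}^{y}t^\alpha\,dt$ is at most $y^\alpha$. Hence the error is bounded by $2y^\alpha$, uniformly for $y\ge1$.

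\emph{Step 2 (sum over $q$).} Substituting gives
\[
T_1(x)=\frac{x^\ph}{\ph}\sum_{q\le x}q^{-\ph}+O\Bigl(x^\alpha\sum_{q\le x}q^{-\alpha}\Bigr).
\]
Since $\sum_{q\le x}q^{-\alpha}\ll x^{1-\alpha}$, the error term is $O(x)$. For the main term, the tail satisfies $\sum_{q>x}q^{-\ph}\ll x^{1-\ph}=x^{-\alpha}$. Therefore
\[
\frac{x^\ph}{\ph}\sum_{q\le x}q^{-\ph}=\frac{\zeta(\ph)}{\ph}x^\ph+O(x^{\ph-\alpha})=Kx^\ph+O(x),
\]
because $\ph-\alpha=1$.

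\emph{Summary.} Combining the two steps gives $T_1(x)=Kx^\ph+O(x)$. There is no real obstacle here. The only point needing care is that the $O(y^\alpha)$ bound in Step 1 must be uniform down to $y\ge1$, since $x/q$ ranges over small values as well, and the comparison argument above provides exactly that uniformity. One may also note that the hyperbola method would not improve the exponent, because the $q$-tail $x^\ph\cdot x^{-\alpha}$ already has size $x$.
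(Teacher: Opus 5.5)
Your proof is correct and is essentially the paper's argument: the estimate $\sum_{d\le y}d^\alpha=y^\ph/\ph+O(y^\alpha)$, uniform for $y\ge1$, is summed over $q\le x$ to leave an error $O(x^\alpha\sum_{q\le x}q^{-\alpha})=O(x)$, and completing $\sum_{q\le x}q^{-\ph}$ to $\zeta(\ph)$ costs $O(x^\ph\cdot x^{-\alpha})=O(x)$. Your explicit integral comparison in Step~1 just makes precise the uniformity down to $y\ge1$, which the paper states without proof.
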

\begin{proof}
The estimate
$\sum_{d\le y}d^\alpha=y^\ph/\ph+O(y^\alpha)$, valid for $y\ge1$, gives
\[
 T_1(x)=\frac{x^\ph}{\ph}\sum_{q\le x}q^{-\ph}
       +O\left(x^\alpha\sum_{q\le x}q^{-\alpha}\right).
\]
The error is $O(x)$ because $\alpha<1$. Extending the first sum to
infinity also costs $O(x)$.
\end{proof}

Write
\[
 u(n)=\frac{A(n)}{Cn^\alpha}-1,\qquad
 E(x)=\sup_{n\ge\max(1,\lceil x\rceil)}|u(n)|.
\]
The kernel theorem shows that $E$ is bounded, nonincreasing, and tends
to zero.

\begin{lemma}[Quantitative clock estimate]\label{lem:clock}
For every fixed $0<\theta<1$ there are constants $D_\theta,m_\theta>0$
such that, for every integer $m\ge m_\theta$,
\begin{equation}\label{eq:clockerror}
 \left|\frac{F(m)}{CKm^\ph}-1\right|
 \le E(m^\theta)+D_\theta m^{-\alpha(1-\theta)}.
\end{equation}
\end{lemma}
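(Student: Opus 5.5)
We start from the endpoint formula \eqref{eq:floor} in the form $F(m)=\sum_{qd\le m}A(d)$, that is, the clock written as a double sum over $q\ge1$ and $d\le m/q$. We compare it with the comparison sum $CT_1(m)=C\sum_{qd\le m}d^\alpha$. Lemma~\ref{lem:Terror} gives $CT_1(m)=CKm^\ph+O(m)$, so this replacement costs a relative error $O(m^{1-\ph})=O(m^{-\alpha})$. That is already smaller than $m^{-\alpha(1-\theta)}$. It therefore suffices to bound
\[
 \Bigl|F(m)-CT_1(m)\Bigr|=\Bigl|\sum_{qd\le m}C d^\alpha u(d)\Bigr|
\]
relative to $CKm^\ph$.

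We split the $d$-sum at $d=m^\theta$. For $d\ge m^\theta$ we have $|u(d)|\le E(m^\theta)$ by definition of $E$ (recall $E$ is a supremum over all $n\ge\lceil x\rceil$). The contribution of these terms is therefore at most
\[
 E(m^\theta)\,C\sum_{qd\le m}d^\alpha=E(m^\theta)\,CT_1(m).
\]
After division by $CKm^\ph$, this becomes $E(m^\theta)(1+O(m^{-\alpha}))$. Since $E$ is bounded, the $O(m^{-\alpha})$ part is absorbed into the $D_\theta$ term.

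For $d<m^\theta$ we only use boundedness: $|u(d)|\le E(1)=:E_0<\infty$, which holds by the kernel theorem or Lemma~\ref{lem:envelope}. The small-$d$ contribution is then at most
\[
 CE_0\sum_{d<m^\theta}d^\alpha\Bigl\lfloor\frac md\Bigr\rfloor\le CE_0\,m\sum_{d<m^\theta}d^{\alpha-1}\ll m\cdot m^{\theta\alpha}.
\]
Dividing by $m^\ph=m^{1+\alpha}$ gives $O(m^{-\alpha(1-\theta)})$. This is exactly the claimed secondary term, and it fixes $D_\theta$.

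Combining the pieces gives
\[
 \Bigl|\frac{F(m)}{CKm^\ph}-1\Bigr|\le E(m^\theta)+D_\theta m^{-\alpha(1-\theta)}
\]
for $m\ge m_\theta$, with $m_\theta$ chosen so that the implied constants from Lemma~\ref{lem:Terror} are in force. There is no genuine obstacle here. The only point needing care is the bookkeeping that keeps the coefficient of $E(m^\theta)$ exactly $1$ rather than $1+o(1)$. This works because the large-$d$ part is bounded by the full sum $T_1(m)$, and the excess $T_1(m)/(Km^\ph)-1=O(m^{-\alpha})$ multiplies the bounded quantity $E$, so it folds into $D_\theta m^{-\alpha(1-\theta)}$.
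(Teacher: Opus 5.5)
Your proposal is correct and matches the paper's proof step for step. Both use the identity $F(m)=CT_1(m)+C\sum_{d\le m}d^\alpha u(d)\lfloor m/d\rfloor$ and split at $d=m^\theta$, bounding the small divisors by boundedness of $u$ and the large ones by $E(m^\theta)T_1(m)$, then absorb the factor $1+O(m^{-\alpha})$ into the remainder because $E$ is bounded.
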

\begin{proof}
Split the error in the exact identity
\[
 F(m)=C T_1(m)+C\sum_{d\le m}d^\alpha u(d)\lfloor m/d\rfloor
\]
at $d=m^\theta$. Boundedness of $u$ gives, for the small divisors,
\[
 \left|\sum_{d\le m^\theta}d^\alpha u(d)\lfloor m/d\rfloor\right|
 \ll m\sum_{d\le m^\theta}d^{\alpha-1}
 \ll m^{1+\alpha\theta}.
\]
The remaining sum has absolute value at most $E(m^\theta)T_1(m)$.
After division by $Km^\ph$, Lemma~\ref{lem:Terror} proves the result.
The extra factor $1+O(m^{-\alpha})$ multiplying $E(m^\theta)$ is
absorbed into the remainder because $E$ is bounded.
\end{proof}

\begin{lemma}[Logarithmic contraction]\label{lem:logcontraction}
Suppose $E:[1,\infty)\to[0,\infty)$ is bounded and, for all sufficiently
large $x$,
\[
 E(x)\le qE(x^b)+Dx^{-\eta},
 \qquad 0<b,q<1,\quad D,\eta>0.
\]
If $\gamma>0$ and $qb^{-\gamma}<1$, then
$E(x)=O((\log x)^{-\gamma})$.
\end{lemma}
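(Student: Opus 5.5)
The plan is to iterate the inequality along the sequence of iterated roots $x_k = x^{b^k}$, which lowers the exponent each time, and stop once $x_k$ falls below a fixed threshold. Choose $x_0\ge e$ large enough that the hypothesis $E(y)\le qE(y^b)+Dy^{-\eta}$ holds for all $y\ge x_0$. Let $S=\sup E<\infty$. For a given large $x$, let $k=k(x)$ be the least integer with $x^{b^k}<x_0^{1/b}$, or equivalently work with $\log x_j = b^j\log x$. For $j<k$ we have $x_j\ge x_0^{1/b}\ge x_0$, so the inequality applies at every step. Unrolling gives
\[
 E(x)\le q^kE(x_k)+D\sum_{j=0}^{k-1}q^j x_j^{-\eta}
 \le q^kS+D\sum_{j=0}^{k-1}q^j\exp\bigl(-\eta b^j\log x\bigr).
\]

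Next, bound each term in terms of $\log x$. By the choice of $k$, $b^{k-1}\log x\ge b^{-1}\log x_0$ (up to a harmless constant), so $b^{k}\gg 1/\log x$. Hence
\[
 q^k = (b^k)^{\log q/\log b}\ll(\log x)^{-\log q/\log b}.
\]
The hypothesis $qb^{-\gamma}<1$ is exactly $\log q/\log b>\gamma$, so this term is $O((\log x)^{-\gamma})$.

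For the sum, reindex from the top by setting $i=k-1-j$. Then $b^j\log x=b^{-i}\,b^{k-1}\log x\ge c\,b^{-i}$ with $c=\log x_0>0$, and $q^j=q^{k-1}q^{-i}$. The sum is therefore at most
\[
 q^{k-1}\sum_{i\ge0}q^{-i}\exp(-\eta c\,b^{-i}).
\]
This series converges, because the doubly exponential decay of $\exp(-\eta c b^{-i})$ beats the geometric growth of $q^{-i}$. So the whole sum is $O(q^k)=O((\log x)^{-\gamma})$, the same bound as before.

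The main obstacle is this last step. A naive bound $x_j^{-\eta}\le x_{k-1}^{-\eta}$ applied to every term would only give $k\,q^{k}$-type estimates, which cost an extra logarithmic factor. Reindexing from the top resolves this by showing the inhomogeneous terms are dominated by their last few summands. Bounded values of $x$ below the threshold are handled by the boundedness of $E$.
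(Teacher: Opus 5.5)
Your argument is correct, but it is organized differently from the paper's.

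\textbf{The paper's route.} The paper first passes to the weighted function $J(t)=t^\gamma E(e^t)$. For $J$ the hypothesis becomes the strict contraction $J(t)\le\rho J(bt)+Dt^\gamma e^{-\eta t}$ with $\rho=qb^{-\gamma}<1$. After iterating down to $b^kt\in[bT,T]$, each inhomogeneous term is bounded crudely by $\sup_{s\ge bT}s^\gamma e^{-\eta s}$. The remaining factors $\rho^j$ sum to at most $(1-\rho)^{-1}$.

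\textbf{Your route.} You iterate the unweighted inequality instead. This forces two extra steps: you must convert $q^k$ into a power of $\log x$ by hand, and you must show $\sum_j q^jx_j^{-\eta}=O(q^k)$. The second step is where your top-down reindexing and the doubly exponential decay of $\exp(-\eta c\,b^{-i})$ are needed. The paper's weighting makes that step unnecessary, since each term then carries a factor $\rho^j$ with $\rho<1$.

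\textbf{What each buys.} The paper's version is shorter. Yours gives a slightly sharper conclusion, $E(x)=O\bigl((\log x)^{-\log q/\log b}\bigr)$. Since $qb^{-\gamma}\le1$ is equivalent to $\log q/\log b\ge\gamma$, your argument also covers the boundary case $qb^{-\gamma}=1$. There the paper's geometric-series bound would lose a factor of order $\log\log x$.

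\textbf{One inequality points the wrong way.} You write $b^k\gg1/\log x$ and conclude $q^k\ll(\log x)^{-\log q/\log b}$. But $q^k=(b^k)^{\log q/\log b}$ with a positive exponent, so a lower bound on $b^k$ gives a lower bound on $q^k$, not an upper bound. What you need is the upper bound $b^k\log x<b^{-1}\log x_0$. This follows immediately from the defining inequality $x^{b^k}<x_0^{1/b}$. The lower bound $b^{k-1}\log x\ge b^{-1}\log x_0$, which comes from minimality of $k$, is the one your reindexed sum uses. Both hold, so the fix is only to cite the correct one at each step.
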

\begin{proof}
Set $J(t)=t^\gamma E(e^t)$ and $\rho=qb^{-\gamma}<1$. Above a fixed
threshold $T>0$,
\[
 J(t)\le\rho J(bt)+D t^\gamma e^{-\eta t}.
\]
Choose $k$ so that $b^kt\in[bT,T]$ and $b^jt>T$ for $j<k$.
Iterating gives
\[
 J(t)\le \rho^kJ(b^kt)
 +D\sum_{j=0}^{k-1}\rho^j(b^jt)^\gamma e^{-\eta b^jt}.
\]
Boundedness of $E$ controls $J$ on $[bT,T]$, while
$\sup_{s\ge bT}s^\gamma e^{-\eta s}<\infty$ bounds each factor in the
sum apart from $\rho^j$. The geometric sum is at most $(1-\rho)^{-1}$,
uniformly in $t$ and $k$. Thus $J$ is bounded.
\end{proof}

\begin{theorem}[Near-logarithmic rate]\label{thm:rate}
For every fixed $0<\gamma<1$,
\[
 \frac{A(n)}{Cn^\alpha}-1=O_\gamma((\log n)^{-\gamma}).
\]
\end{theorem}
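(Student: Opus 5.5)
The plan is to derive a recursive inequality of the form required by Lemma~\ref{lem:logcontraction} and then apply that lemma. Fix $0<\theta<1$. For large $n$ put $m=A(n)$, so that $F(m-1)<n\le F(m)$ by \eqref{eq:inverse}. Lemma~\ref{lem:clock} gives
\[
F(m)\le CKm^\ph\bigl(1+\delta\bigr),\qquad F(m-1)\ge CK(m-1)^\ph\bigl(1-\delta\bigr),
\]
where $\delta=E((m-1)^\theta)+D_\theta (m-1)^{-\alpha(1-\theta)}$. Since $E$ is nonincreasing, the same $\delta$ serves for both $m$ and $m-1$. Inverting these bounds and using $C^{-\ph}=CK$ (so that $(CK)^{-\alpha}=C$) gives
\[
C n^\alpha(1+\delta)^{-\alpha}\le m< Cn^\alpha(1-\delta)^{-\alpha}+1 .
\]
When $\delta$ is small this yields $|u(n)|\le \alpha\delta+O(\delta^2)+O(n^{-\alpha})$.

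Next I convert the argument of $E$ to a function of $n$. By Lemma~\ref{lem:envelope}, $m-1\ge c\,n^{\alpha}$ for large $n$, so $(m-1)^\theta\ge c' n^{\alpha\theta}$. Using monotonicity of $E$, and absorbing the constant $c'$ by replacing the exponent with a slightly smaller $b<\alpha\theta$, we get $E((m-1)^\theta)\le E(n^{b})$ for large $n$. The term $D_\theta(m-1)^{-\alpha(1-\theta)}$ is $O(n^{-\alpha^2(1-\theta)})$.

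The main obstacle is the quadratic term. We need $|u(n)|\le q\,E(n^b)+Dn^{-\eta}$ with $q$ a genuine constant below $1$, not $\alpha+O(\delta)$. Since $E(x)\to0$, for any fixed $q\in(\alpha,1)$ there is $x_0$ such that $\alpha\delta+O(\delta^2)\le q\delta$ whenever the relevant arguments exceed $x_0$. Taking the supremum over $n\ge x$ then gives
\[
E(x)\le qE(x^b)+Dx^{-\eta},
\]
with $\eta=\min(\alpha,\alpha^2(1-\theta))$, for all large $x$. Lemma~\ref{lem:logcontraction} applies whenever $qb^{-\gamma}<1$, that is, whenever $\gamma<\log(1/q)/\log(1/b)$.

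Finally I optimize the parameters. Let $q\downarrow\alpha$, $\theta\uparrow1$ and $b\uparrow\alpha$. Then $\log(1/q)/\log(1/b)\to\log(1/\alpha)/\log(1/\alpha)=1$. Consequently every $\gamma<1$ is admissible, and Lemma~\ref{lem:logcontraction} gives $E(x)=O_\gamma((\log x)^{-\gamma})$. This proves the theorem. The delicate bookkeeping is keeping $b<\alpha\theta$ strictly, so that the constant from Lemma~\ref{lem:envelope} is absorbed, while $b$ can still be taken arbitrarily close to $\alpha$; that is what makes $\gamma\to1$ rather than something smaller.
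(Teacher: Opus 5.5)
Your proof is correct and follows essentially the paper's route: invert the clock estimate of Lemma~\ref{lem:clock} with a linearization constant $q>\alpha$, pass from $m^\theta$ to $n^b$ with $b<\alpha\theta$ via the power envelope, and apply Lemma~\ref{lem:logcontraction} with $qb^{-\gamma}\to\alpha^{1-\gamma}<1$. The only difference is cosmetic. You sandwich $n$ between $F(m-1)$ and $F(m)$ by applying Lemma~\ref{lem:clock} at both endpoints. The paper instead uses only $F(m)$ together with $0\le F(m)-n<B(m)\ll m$.
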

\begin{proof}
Let $m=A(n)$. The power envelope gives
\[
 B(m)\ll\sigma_\alpha(m)
 =m^\alpha\sum_{q\mid m}q^{-\alpha}
 \le m^\alpha\sum_{q\le m}q^{-\alpha}\ll m.
\]
Since $0\le F(m)-n<B(m)$, replacing $F(m)$ by $n$ in
Lemma~\ref{lem:clock} changes the normalized error by $O(m^{-\alpha})$.
We may therefore write $n=CKm^\ph(1+v_n)$, where
\[
 |v_n|\le E(m^\theta)+O_\theta(m^{-\alpha(1-\theta)}).
\]
In particular $v_n\to0$. The normalization $C(CK)^\alpha=1$ gives
$u(n)=(1+v_n)^{-\alpha}-1$. For any fixed $q>\alpha$, the derivative of
$(1+v)^{-\alpha}$ is bounded in absolute value by $q$ on a sufficiently
small neighborhood of zero. Uniformly for all sufficiently large $n$,
\begin{equation}\label{eq:rawcontraction}
 |u(n)|\le qE(m^\theta)+O_{\theta,q}(m^{-\alpha(1-\theta)}).
\end{equation}

Fix $\gamma\in(0,1)$. Choose $\theta<1$ close enough to $1$,
$b<\alpha\theta$ close enough to $\alpha$, and $q>\alpha$ close enough
to $\alpha$, with $0<b,q<1$, so that
\begin{equation}\label{eq:qb}
 qb^{-\gamma}<1.
\end{equation}
Such choices exist because the limiting value is
$\alpha^{1-\gamma}<1$. The power envelope implies $m\asymp n^\alpha$,
hence eventually $m^\theta\ge n^b$. Using monotonicity of $E$ and taking
the supremum over integers $n\ge\lceil x\rceil$ in
\eqref{eq:rawcontraction} yields
\begin{equation}\label{eq:tailcontraction}
 E(x)\le qE(x^b)+Dx^{-\eta},\qquad
 \eta=\alpha^2(1-\theta)>0,
\end{equation}
for all sufficiently large real $x$. Here $D$ and the threshold
depend only on the chosen parameters and the fixed sequence.
Lemma~\ref{lem:logcontraction} proves the claim.
\end{proof}

The endpoint $\gamma=1$ is not obtained: the limiting contraction factor
in \eqref{eq:qb} then becomes $1$. This is the obstruction in the
present argument, not evidence that a $1/\log n$ relative bound is false.

\section{Arithmetic fluctuations and their limiting law}\label{sec:fluctuations}
\subsection{Uniform comparison with a divisor function}
\begin{proposition}[Uniform divisor comparison]\label{prop:uniform}
For every $0<\gamma<1$,
\begin{equation}\label{eq:uniform}
 B(m)=C\sigma_\alpha(m)
 \left(1+O_\gamma((\log m)^{-\gamma})\right).
\end{equation}
In particular, if
\begin{equation}\label{eq:RZ}
 R_m=\frac{B(m)}{Cm^\alpha},\qquad
 Z_m=\sigma_{-\alpha}(m)=\sum_{d\mid m}d^{-\alpha},
\end{equation}
then $R_m/Z_m\to1$.
\end{proposition}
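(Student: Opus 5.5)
Since $B(m)=\sum_{d\mid m}A(d)$ and Theorem~\ref{thm:rate} gives $A(d)=Cd^\alpha(1+u(d))$ with $|u(d)|\ll_\gamma(\log d)^{-\gamma}$ for $d\ge2$, I will write
\[
 B(m)-C\sigma_\alpha(m)=C\sum_{d\mid m}d^\alpha u(d)
\]
and show that the right side is $O_\gamma\bigl(\sigma_\alpha(m)(\log m)^{-\gamma}\bigr)$. The small divisors are the only difficulty, since $(\log d)^{-\gamma}$ is not small for them. I will therefore split the divisor sum at $d=m^{1/2}$ (any fixed power $m^\beta$ with $0<\beta<1$ would do).

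For the large divisors $d>m^{1/2}$, $\log d\ge\tfrac12\log m$, so $|u(d)|\ll_\gamma(\log m)^{-\gamma}$ uniformly. Their contribution is therefore at most $O_\gamma((\log m)^{-\gamma})\sum_{d\mid m}d^\alpha=O_\gamma((\log m)^{-\gamma}\sigma_\alpha(m))$.

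For the small divisors $d\le m^{1/2}$, I will use only boundedness of $u$, which follows from Lemma~\ref{lem:envelope}. Their contribution is at most a constant times $\sum_{d\mid m,\,d\le m^{1/2}}d^\alpha\le m^{\alpha/2}\tau(m)$, where $\tau$ is the divisor-counting function. Since $\sigma_\alpha(m)\ge m^\alpha$ (the term $d=m$) and $\tau(m)=m^{o(1)}$, this is $O(\sigma_\alpha(m)\,m^{-\alpha/2+o(1)})$, which is far smaller than $(\log m)^{-\gamma}\sigma_\alpha(m)$. If one prefers to avoid the divisor bound, the crude estimate $\sum_{d\le m^{1/2}}d^\alpha\le m^{\ph/2}$ also suffices: $\ph/2<\alpha$ because $\ph/2\approx0.809>0.618$ fails, so instead split at $m^{\beta}$ with $\beta\ph<\alpha$, i.e.\ $\beta<\alpha^2$, and use $\log d\ge\beta\log m$ for the large part. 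I will take this crude route with $\beta=\alpha^2/2$, which keeps everything elementary.

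Adding the two parts gives \eqref{eq:uniform}. Dividing by $Cm^\alpha$, and using $\sigma_\alpha(m)/m^\alpha=\sigma_{-\alpha}(m)$ via the pairing $d\leftrightarrow m/d$, then gives $R_m/Z_m=1+O_\gamma((\log m)^{-\gamma})\to1$. The one step needing care is choosing the split point so that the small-divisor part is negligible relative to $\sigma_\alpha(m)$. The bound $\sigma_\alpha(m)\ge m^\alpha$ makes this a power saving, so I expect no real obstacle.
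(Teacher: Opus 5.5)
Your proposal is correct and is essentially the paper's own proof: the paper also splits the divisor sum at $d=m^{\delta}$ with $\delta=\alpha/(2\ph)=\alpha^2/2$ (exactly your $\beta$), bounds the small divisors via the power envelope and $\sum_{d\le y}d^\alpha\le y^\ph$ to get $O(m^{\alpha/2})$, applies Theorem~\ref{thm:rate} on the remaining divisors, and divides by $\sigma_\alpha(m)\ge m^\alpha$. Your alternative split at $m^{1/2}$ with $\tau(m)=m^{o(1)}$ is also valid, but you should delete the garbled sentence asserting $\ph/2<\alpha$.
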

\begin{proof}
The qualitative assertion follows by fixing $D$, applying
$A(d)=Cd^\alpha(1+o(1))$ uniformly for $d>D$, and observing that
the error from the finitely many $d\le D$ is $O_D(1)$, whereas
$\sigma_\alpha(m)\ge m^\alpha$.

For the stated rate choose $\delta=\alpha/(2\ph)>0$. Divisors
$d\le m^\delta$ contribute an error at most
\[
 O\left(\sum_{d\le m^\delta}d^\alpha\right)=O(m^{\alpha/2}),
\]
using the power envelope. On the remaining divisors,
Theorem~\ref{thm:rate} bounds the relative summand error by
$O_\gamma((\log m)^{-\gamma})$. Divide their sum by
$C\sigma_\alpha(m)$, and use
$\sigma_\alpha(m)=m^\alpha\sigma_{-\alpha}(m)$. The first part then
contributes $O(m^{-\alpha/2})=o((\log m)^{-\gamma})$.
\end{proof}

\begin{corollary}[Primes, prime powers, and primorials]\label{cor:primes}
For a fixed prime $p$ and a fixed positive integer $q$,
\begin{align}
 \lim_{\substack{r\to\infty\\r\ \mathrm{prime}}}R_r&=1,
       \label{eq:prime}\\
 \lim_{j\to\infty}R_{p^j}&=\frac1{1-p^{-\alpha}},
       \label{eq:primepower}\\
 \lim_{\substack{r\to\infty\\r\ \mathrm{prime},\ r\nmid q}}R_{qr}
       &=\sigma_{-\alpha}(q).\label{eq:fixedpart}
\end{align}
Along primorials $P(y)=\prod_{p\le y}p$, one has
\begin{equation}\label{eq:primorial}
 R_{P(y)}\sim\prod_{p\le y}(1+p^{-\alpha})\longrightarrow\infty.
\end{equation}
Consequently \eqref{eq:liminfintro} holds.
\end{corollary}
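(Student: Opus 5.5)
All four limits follow from Proposition~\ref{prop:uniform}, which gives $R_m/Z_m\to1$ along \emph{all} integers with $Z_m=\sigma_{-\alpha}(m)$. It therefore suffices in each case to evaluate $\sigma_{-\alpha}$ along the stated subsequence and check that the indices tend to infinity.

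\emph{Primes.} For a prime $r$, $Z_r=1+r^{-\alpha}$, which tends to $1$ as $r\to\infty$. This gives \eqref{eq:prime}.

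\emph{Prime powers.} For $p^j$, the divisor sum is a geometric series:
\[
 Z_{p^j}=\sum_{i=0}^{j}p^{-i\alpha}=\frac{1-p^{-(j+1)\alpha}}{1-p^{-\alpha}}\longrightarrow\frac{1}{1-p^{-\alpha}}.
\]
This gives \eqref{eq:primepower}.

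\emph{Fixed part times a prime.} Let $r$ be a prime with $r\nmid q$. Since $\sigma_{-\alpha}$ is multiplicative and $\gcd(q,r)=1$,
\[
 Z_{qr}=\sigma_{-\alpha}(q)\,(1+r^{-\alpha})\longrightarrow\sigma_{-\alpha}(q).
\]
This gives \eqref{eq:fixedpart}.

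\emph{Primorials.} The primorial $P(y)$ is squarefree, so multiplicativity gives
\[
 Z_{P(y)}=\prod_{p\le y}(1+p^{-\alpha}).
\]
Since $P(y)\to\infty$, Proposition~\ref{prop:uniform} yields the asymptotic equivalence in \eqref{eq:primorial}. The product diverges because
\[
 \log\prod_{p\le y}(1+p^{-\alpha})\ge\tfrac12\sum_{p\le y}p^{-\alpha},
\]
using $\log(1+t)\ge t/2$ for $0\le t\le1$. The sum on the right diverges, since it dominates $\sum_{p\le y}p^{-1}$, and that series diverges (Mertens, or even just Euler). In fact the prime number theorem gives the sum the size $y^{1-\alpha}/\log y$ up to constants, but divergence is all that is needed here.

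\emph{Consequences.} For every $m$ we have $Z_m\ge1$, because of the $d=1$ term, so $\liminf_m R_m=\liminf_m Z_m\ge1$. The prime subsequence attains the value $1$, so $\liminf_m R_m=1$. The primorial subsequence gives $\limsup_m R_m=\infty$. Together these prove \eqref{eq:liminfintro}.

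Nothing here is a real obstacle. The only point to stress is that Proposition~\ref{prop:uniform} holds uniformly over all $m$, not merely on average. That uniformity is exactly what allows it to be applied along the sparse subsequences of primes, prime powers and primorials.
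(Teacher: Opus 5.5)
Your proof is correct and follows the paper's argument: you apply Proposition~\ref{prop:uniform}, evaluate $\sigma_{-\alpha}$ via its finite Euler product along each subsequence, and combine $Z_m\ge1$ with the prime and primorial subsequences to get \eqref{eq:liminfintro}. The only cosmetic difference is the divergence step. You use $\log(1+t)\ge t/2$ and the divergence of $\sum_p 1/p$, whereas the paper compares with $\prod_{p\le y}(1+1/p)$ and bounds that below by $\zeta(2)^{-1}\sum_{n\le y}1/n$.
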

\begin{proof}
Apply Proposition~\ref{prop:uniform} and the finite Euler product
for $\sigma_{-\alpha}$. For example,
$Z_{qr}=\sigma_{-\alpha}(q)(1+r^{-\alpha})$ when $r\nmid q$.
The primorial product diverges already by comparison with
$\prod_{p\le y}(1+1/p)$. For completeness, the latter is
\[
 \prod_{p\le y}(1-p^{-2})\prod_{p\le y}(1-p^{-1})^{-1}.
\]
The first factor is bounded below by $1/\zeta(2)>0$; the second
is at least $\sum_{n\le y}1/n$, by expanding its finite Euler
product. Finally $Z_m\ge1$ for every $m$, while prime arguments
approach $1$, proving the liminf assertion.
\end{proof}

\subsection{Sharp maximal order}
The primorial argument can be sharpened from divergence to an exact
logarithmic scale. Maximal orders of generalized divisor sums have a
classical history \cite{ramanujan}; the following elementary proof
exploits the useful inequality $2\alpha>1$.

\begin{theorem}[Sharp maximal order]\label{thm:maximal}
Let $p_j$ be the $j$th prime, $P_k=\prod_{j\le k}p_j$, and, for $x\ge2$,
\[
 k(x)=\max\{k\ge1:P_k\le x\}.
\]
Then, as $x\to\infty$,
\begin{equation}\label{eq:maximalexact}
 \log\max_{m\le x}R_m
 =\sum_{j\le k(x)}p_j^{-\alpha}+O(1),
\end{equation}
and consequently
\begin{equation}\label{eq:maximalasymptotic}
 \log\max_{m\le x}R_m
 \sim\frac{(\log x)^{1-\alpha}}
 {(1-\alpha)\log\log x}
 =\ph^2\frac{(\log x)^{2-\ph}}{\log\log x}.
\end{equation}
In particular, $R_m=m^{o(1)}$. The same leading scale holds for the
unnormalized block lengths:
\begin{equation}\label{eq:Bmaximal}
 \log\left(\frac{\max_{m\le x}B(m)}{x^\alpha}\right)
 \sim\frac{(\log x)^{1-\alpha}}
 {(1-\alpha)\log\log x}.
\end{equation}
\end{theorem}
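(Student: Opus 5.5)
By Proposition~\ref{prop:uniform}, $\log R_m=\log Z_m+o(1)$ uniformly in $m$, where $Z_m=\sigma_{-\alpha}(m)$. So it suffices to prove \eqref{eq:maximalexact} with $R_m$ replaced by $Z_m$. The plan is to determine $\log\max_{m\le x}Z_m$ up to $O(1)$, and then to extract the asymptotic from the prime number theorem.

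\emph{Lower bound.} Take $m=P_{k(x)}\le x$. Then
\[
\log Z_{P_k}=\sum_{j\le k}\log(1+p_j^{-\alpha})=\sum_{j\le k}p_j^{-\alpha}+O\Bigl(\sum_p p^{-2\alpha}\Bigr),
\]
and the error converges because $2\alpha>1$.

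\emph{Upper bound.} Let $m\le x$ with $m=\prod p^{a_p}$. Then
\[
\log Z_m=\sum_{p\mid m}\log\bigl(1+p^{-\alpha}+\dots+p^{-a_p\alpha}\bigr)\le\sum_{p\mid m}p^{-\alpha}+\sum_{p}\bigl(-\log(1-p^{-\alpha})-p^{-\alpha}\bigr),
\]
and the last sum is $O(1)$, again since $2\alpha>1$. It therefore remains to bound $\sum_{p\mid m}p^{-\alpha}$ by $\sum_{j\le k}p_j^{-\alpha}+O(1)$ with $k=k(x)$.

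Let $m$ have $r$ distinct prime factors $q_1<\dots<q_r$. Then $q_i\ge p_i$ and $P_r\le m\le x$, so $r\le k$. Since $t\mapsto t^{-\alpha}$ is decreasing, $\sum_i q_i^{-\alpha}\le\sum_{i\le r}p_i^{-\alpha}\le\sum_{j\le k}p_j^{-\alpha}$. This is the exchange argument: replacing the prime factors by the first $r$ primes only increases the sum, and $r\le k$. Combining the two bounds gives \eqref{eq:maximalexact}.

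\emph{Asymptotic.} By the prime number theorem, $\log P_k=\theta(p_k)\sim p_k$. The definition of $k(x)$ gives $\theta(p_k)\le\log x<\theta(p_{k+1})$, and $p_{k+1}\sim p_k$, so $p_{k(x)}\sim\log x$. Partial summation with $\pi(t)\sim t/\log t$ gives
\[
\sum_{p\le y}p^{-\alpha}\sim\frac{y^{1-\alpha}}{(1-\alpha)\log y}.
\]
Setting $y=p_{k(x)}\sim\log x$ yields \eqref{eq:maximalasymptotic}. The identity $1/(1-\alpha)=\ph^2$ follows from $1-\alpha=\alpha^2$. Since the exponent $1-\alpha<1$, the right side is $o(\log x)$, hence $R_m=m^{o(1)}$.

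\emph{Block lengths.} We have $\max_{m\le x}B(m)\le Cx^\alpha\max_{m\le x}R_m$. For the reverse direction, take $m=P_{k(x)}$ and bound $B(m)\ge Cm^\alpha R_m$. The difficulty is that $m^\alpha$ may be much smaller than $x^\alpha$, since $x/P_{k(x)}$ can be as large as about $p_{k+1}\sim\log x$.

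The fix is to choose $m=P_{k'}\cdot t$ with $k'=k(x/\log x)$ and $t=\lfloor x/P_{k'}\rfloor$, so that $m\ge x/2$. Multiplying by $t$ does not decrease $Z_m$, because $Z$ is monotone under divisibility. Moreover $k'$ differs from $k$ by only $O(1)$ primes, each near $\log x$. These lost primes contribute only $O((\log x)^{-\alpha})=o(1)$ to the sum, so the lower bound remains $\sum_{j\le k}p_j^{-\alpha}+O(1)$. The factor $x^\alpha/m^\alpha$ is then $O(1)$, which proves \eqref{eq:Bmaximal}. I expect this last adjustment, keeping $m$ comparable to $x$ while staying near the primorial structure, to be the only step needing care. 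Everything else is routine once the exchange argument is in place.
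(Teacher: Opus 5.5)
Your proof is correct. For \eqref{eq:maximalexact}, \eqref{eq:maximalasymptotic} and $R_m=m^{o(1)}$ it follows the paper's proof:
\begin{itemize}
\item the Euler factors of $Z_m$ lie between $1+p^{-\alpha}$ and $(1-p^{-\alpha})^{-1}$, and $2\alpha>1$ makes the difference $O(1)$;
\item $m\le x$ forces $\omega(m)\le k(x)$, and $p^{-\alpha}$ decreases in $p$;
\item the primorial $P_{k(x)}$ gives the lower bound;
\item the prime number theorem gives $p_{k(x)}\sim\log x$.
\end{itemize}

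The only divergence is the lower bound in \eqref{eq:Bmaximal}. The paper takes $m=P_{k(x)}$ and accepts the loss $\alpha\log(x/P_{k(x)})<\alpha\log p_{k(x)+1}=O(\log\log x)$. This loss is negligible against $(\log x)^{1-\alpha}/\log\log x$. So the step you flagged as the only one needing care is not needed for the asymptotic as stated.

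Your construction $m=P_{k'}\lfloor x/P_{k'}\rfloor\in[x/2,x]$, with $k'=k(x/\log x)$, takes a little more work but proves something sharper: $\log\bigl(\max_{m\le x}B(m)/x^\alpha\bigr)=\sum_{j\le k(x)}p_j^{-\alpha}+O(1)$. You asserted $k-k'=O(1)$ without proof; it is easy to supply. From $P_k\le x$ and $P_{k'+1}>x/\log x$ you get $\prod_{k'<j\le k}p_j=P_k/P_{k'}<p_{k'+1}\log x$. Every factor is at least $p_{k'+1}\sim\log x$, so $k-k'\le 2$ for large $x$. The discarded primes then contribute only $O((\log x)^{-\alpha})$, as you claimed.
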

\begin{proof}
If $p^a\Vert m$, then its Euler factor in $Z_m=\sigma_{-\alpha}(m)$
lies between $1+p^{-\alpha}$ and $(1-p^{-\alpha})^{-1}$. Because
$2\alpha>1$, summing the corresponding logarithmic expansions gives,
uniformly in $m$,
\begin{equation}\label{eq:logZsupport}
 \log Z_m=\sum_{p\mid m}p^{-\alpha}+O(1).
\end{equation}
If $m\le x$, then $\omega(m)\le k(x)$, since the product of any $r$
distinct primes is at least $P_r$. As $p^{-\alpha}$ decreases with $p$,
the sum in \eqref{eq:logZsupport} is at most
$\sum_{j\le k(x)}p_j^{-\alpha}$. The matching lower bound, up to $O(1)$,
is attained by $m=P_{k(x)}$. Proposition~\ref{prop:uniform} gives
$\log R_m-\log Z_m=o(1)$ uniformly on every tail, proving
\eqref{eq:maximalexact}.

The prime number theorem \cite[\S27.12]{dlmf} and partial summation give
\[
 \sum_{p\le y}p^{-\alpha}
 \sim\frac{y^{1-\alpha}}{(1-\alpha)\log y},
 \qquad p_{k(x)}\sim\log x.
\]
This proves \eqref{eq:maximalasymptotic} and the subpower bound. For
\eqref{eq:Bmaximal}, use the same upper bound and the primorial lower
bound. By the definition of $k(x)$,
\[
 x<P_{k(x)+1}=P_{k(x)}p_{k(x)+1},\qquad
 0\le\log\frac{x}{P_{k(x)}}<\log p_{k(x)+1}=O(\log\log x).
\]
This loss is negligible on the scale in \eqref{eq:maximalasymptotic}.
\end{proof}

\subsection{A random Euler product}
Recall the prime-adic variables and Euler product from
Theorem~\ref{thm:arithmetic}. For each prime $p$, the variables $V_p$
are independent with distribution
\begin{equation}\label{eq:valuations}
 \Prob(V_p=j)=(1-p^{-1})p^{-j},\qquad j=0,1,2,\ldots.
\end{equation}
Define
\begin{equation}\label{eq:randomZ}
 Z=\prod_p\left(1+p^{-\alpha}+\cdots+p^{-\alpha V_p}\right).
\end{equation}
This is the natural probabilistic model for divisibility of a
uniformly sampled integer. Its use belongs to the classical theory of
arithmetical limiting distributions \cite{erdoswintner}; the elementary
argument below is included to specify precisely what is needed here.

\begin{theorem}[Value-sampled distribution]\label{thm:distribution}
The product \eqref{eq:randomZ} converges to a finite random variable
almost surely. For every bounded continuous $h:[0,\infty)\to\mathbb R$,
\begin{equation}\label{eq:weak}
 \lim_{M\to\infty}\frac1M\sum_{m\le M}h(R_m)=\E[h(Z)].
\end{equation}
Equivalently, the empirical distributions converge weakly to the law
of $Z$. All positive integer moments converge. For an integer $r\ge1$,
\begin{equation}\label{eq:allmoments}
 \lim_{M\to\infty}\frac1M\sum_{m\le M}R_m^r
 =\E[Z^r]
 =\prod_p\left[(1-p^{-1})
   \sum_{j=0}^{\infty}\frac{(1+p^{-\alpha}+\cdots+p^{-j\alpha})^r}{p^j}\right].
\end{equation}
In particular,
\begin{align}
 \E Z&=\zeta(\ph),\label{eq:mean}\\
 \E Z^2&=\frac{\zeta(\ph)^2\zeta(2\ph-1)}{\zeta(2\ph)},
       \label{eq:second}\\
 \Var Z&=\zeta(\ph)^2
 \left(\frac{\zeta(2\ph-1)}{\zeta(2\ph)}-1\right)>0.
       \label{eq:variance}
\end{align}
Here $2\ph-1=\sqrt5$.
\end{theorem}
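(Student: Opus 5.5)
By Proposition~\ref{prop:uniform}, $R_m/Z_m\to1$ with $Z_m=\sigma_{-\alpha}(m)$. This reduces everything to the values $Z_m$ and their moments. So I would prove the statement first for $Z_m$ and then transfer it to $R_m$.

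\emph{Almost sure convergence and the comparison.} Write $\log Z=\sum_p \log\bigl(1+p^{-\alpha}+\cdots+p^{-\alpha V_p}\bigr)$. Each summand is nonnegative. Since $\Prob(V_p\ge1)=p^{-1}$ and the factor is at most $(1-p^{-\alpha})^{-1}$, the $p$-th summand has expectation
\[
\le p^{-1}\log(1-p^{-\alpha})^{-1}\ll p^{-1-\alpha}.
\]
This is summable, so $\E\log Z<\infty$, and the product converges almost surely by monotone convergence.

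Next fix $y$ and set $Z^{(y)}=\prod_{p\le y}(\cdots)$ and $Z_m^{(y)}=\prod_{p\le y,\,p^a\Vert m}(1+\cdots+p^{-a\alpha})$. The vector of $p$-adic valuations of a uniform $m\le M$, truncated at a fixed height, converges in law to $(V_p)_{p\le y}$ by the Chinese remainder theorem. Large valuations have uniformly small probability, and the factor is continuous and bounded in $a$. Hence $Z_m^{(y)}$ converges in law to $Z^{(y)}$.

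For the tail, note $0\le\log Z_m-\log Z_m^{(y)}\le\sum_{p\mid m,\,p>y}\log(1-p^{-\alpha})^{-1}$. Its average over $m\le M$ is at most $\sum_{p>y}p^{-1}\cdot O(p^{-\alpha})$, which tends to $0$ as $y\to\infty$. Markov's inequality then gives the standard approximation argument (Billingsley's theorem on converging together): $Z_m$ converges in law to $Z$. Because $\log R_m-\log Z_m\to0$ uniformly, $R_m$ has the same limit law, which gives \eqref{eq:weak}.

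\emph{Moments.} This is the main obstacle, since weak convergence alone does not give moments. I would prove uniform integrability by bounding $\frac1M\sum_{m\le M}Z_m^{s}$ for some $s>r$, say $s=r+1$. The function $Z_m^s$ is multiplicative with $Z_p^s=(1+p^{-\alpha})^s=1+O(p^{-\alpha})$. Write $Z_m^s=\sum_{d\mid m}g(d)$ with $g=Z^s*\mu$. Then $g(p)=O(p^{-\alpha})$ and $|g(p^k)|\ll p^{-\alpha}$ uniformly in $k$ (the increments of a bounded convergent sequence). So
\[
\frac1M\sum_{m\le M}Z_m^s\le\sum_d |g(d)|/d=\prod_p\Bigl(1+O\bigl(p^{-1-\alpha}\bigr)\Bigr)<\infty .
\]
Uniform boundedness of the $s$-th moments gives uniform integrability of $R_m^r$, using $R_m\le 2Z_m$ for large $m$. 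Convergence of the $r$-th moments then follows.

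The same Dirichlet-convolution computation identifies the limit directly. The limit equals $\sum_d g(d)/d$, which factors as the stated Euler product, since the $p$-factor equals $\E[(\text{factor})^r]$ with $V_p$ geometric. This proves \eqref{eq:allmoments}.

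\emph{Explicit values.} For $r=1$, $\E Z=\prod_p\E(1+\cdots+p^{-\alpha V_p})$ and $\E Z=\sum_d d^{-\alpha}\Prob(d\mid\cdot)=\sum_d d^{-\alpha-1}$, so $\E Z=\zeta(1+\alpha)=\zeta(\ph)$. This is just the mean value of $\sigma_{-\alpha}$.

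For $r=2$, use
\[
\frac1M\sum_{m\le M}\sigma_{-\alpha}(m)^2\to\sum_{d_1,d_2}(d_1d_2)^{-\alpha}/\lcm(d_1,d_2),
\]
and apply Ramanujan's identity $\sum\sigma_a\sigma_b n^{-s}=\zeta(s)\zeta(s-a)\zeta(s-b)\zeta(s-a-b)/\zeta(2s-a-b)$. Equivalently, compute the local factor directly. With $a=b=-\alpha$, $s=1$, this gives
\[
\zeta(1)^{-1}\cdot\zeta(1+\alpha)^2\zeta(1+2\alpha)/\zeta(2+2\alpha)=\zeta(\ph)^2\zeta(2\ph-1)/\zeta(2\ph),
\]
after removing the pole through the density normalization $(1-p^{-1})$ in each local factor. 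A local check at each prime confirms this.

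The variance formula is then immediate. Its positivity follows because $\zeta(\sqrt5)>\zeta(2\ph)$, as $\zeta$ is decreasing on $(1,\infty)$.
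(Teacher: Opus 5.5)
Your proof is correct, but it relies on the multiplicative structure of $\sigma_{-\alpha}$, while the paper works throughout with divisor expansions.

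The paper obtains almost sure finiteness from $\E\sum_d d^{-\alpha}I_d=\zeta(\ph)$. This realizes $Z$ as a random divisor sum and gives $\E Z$ at once. It truncates by divisors $d\le D$ rather than primes $p\le y$, so $Z_{m,D}$ is periodic and $Z_m-Z_{m,D}$ itself is small in mean, bounded by $\sum_{d>D}d^{-1-\alpha}$. It passes to $R_m$ through the Ces\`aro estimate $\frac1M\sum_{m\le M}|R_m-Z_m|\to0$ plus tightness; you use Slutsky on $\log R_m-\log Z_m\to0$ instead.

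For the moments, the paper expands $Z_m^r$ as an $r$-fold divisor sum weighted by $\lfloor M/\lcm(d_1,\ldots,d_r)\rfloor/M$. It applies dominated convergence against $\sum(d_1\cdots d_r)^{-\alpha}/\lcm(d_1,\ldots,d_r)$ and transfers to $R_m$ via $|R_m^r-Z_m^r|\le c_r|\varepsilon_m|Z_m^r$, which needs only the $r$th moment bound. You instead use the Wintner-type mean value $\sum_d(\sigma_{-\alpha}^s*\mu)(d)/d$ and uniform integrability from the $(r+1)$st moment. You get $\E Z^2$ from Ramanujan's identity, where the paper substitutes $d=ga$, $e=gb$.

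Your route has one advantage: it works verbatim for every real exponent, since the increments of $k\mapsto\sigma_{-\alpha}(p^k)^t$ are still $O_t(p^{-\alpha})$. It therefore also yields Proposition~\ref{prop:mellin} directly. The paper's lcm-series formulation pays off in Theorem~\ref{thm:correlations}. There $m\mapsto Z_mZ_{m+h}$ is not multiplicative, so a Wintner-type mean value is unavailable. The shifted series, however, is a subseries of the second-moment lcm series, and the same series bounds the Ces\`aro $L^2$ truncation error.

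Finally, ``$\zeta(1)^{-1}$'' in your second-moment step is only a mnemonic. What you actually need, and should display as your local check, is
\[
 (1-p^{-1})\sum_{j\ge0}\sigma_{-\alpha}(p^j)^2p^{-j}
 =\frac{1-p^{-2\ph}}{(1-p^{-\ph})^2\,(1-p^{-\sqrt5})}
\]
for every prime $p$; the product of the right sides is $\zeta(\ph)^2\zeta(\sqrt5)/\zeta(2\ph)$.
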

\begin{proof}
Realize the independent $V_p$ on their countable product probability
space and, for every $d=\prod p^{v_p(d)}$, set
$I_d=\ind_{\{V_p\ge v_p(d)\ \text{for all }p\}}$. Then
$\E I_d=1/d$. Monotone convergence gives
\[
 \E\left[\sum_{d\ge1}d^{-\alpha}I_d\right]
 =\sum_{d\ge1}d^{-1-\alpha}=\zeta(\ph)<\infty.
\]
The nonnegative sum is therefore finite almost surely. Expanding
finite-prime products and passing monotonically to the limit shows
that it equals $Z$ in \eqref{eq:randomZ}.

For a finite cutoff $D$, the arithmetic approximation
\[
 Z_{m,D}=\sum_{\substack{d\le D\\d\mid m}}d^{-\alpha}
\]
is periodic. Uniform residues modulo $\lcm(1,\ldots,D)$, and the
Chinese remainder theorem, identify its limiting distribution with
that of $Z_D=\sum_{d\le D}d^{-\alpha}I_d$. Its arithmetic tail is
uniformly small in mean:
\begin{equation}\label{eq:tailZ}
 \frac1M\sum_{m\le M}(Z_m-Z_{m,D})
 =\sum_{D<d\le M}d^{-\alpha}\frac{\lfloor M/d\rfloor}{M}
 \le\sum_{d>D}d^{-1-\alpha}.
\end{equation}
The probabilistic tail has the same upper bound. First testing
bounded Lipschitz functions and then letting $D\to\infty$ proves
weak convergence for $Z_m$. Also
$M^{-1}\sum_{m\le M}Z_m\le\zeta(\ph)$. Since $R_m/Z_m\to1$, it
follows that
\begin{equation}\label{eq:RZLone}
 \frac1M\sum_{m\le M}|R_m-Z_m|\longrightarrow0.
\end{equation}
To justify the transfer for every bounded continuous test function,
let $J_M$ be uniform on $\{1,\ldots,M\}$. For every $\varepsilon>0$,
Markov's inequality gives
\[
 \Prob(|R_{J_M}-Z_{J_M}|>\varepsilon)
 \le\frac{1}{\varepsilon M}\sum_{m\le M}|R_m-Z_m|
 \longrightarrow0.
\]
The already established convergence $Z_{J_M}\Rightarrow Z$ gives
tightness. Together with this estimate it also gives tightness of
$R_{J_M}$. On a fixed compact interval, a continuous test function is
uniformly continuous, so the difference of its expectations tends to
zero after the tails have been made arbitrarily small. Hence
$R_{J_M}\Rightarrow Z$, without requiring uniform continuity on
the entire half-line.

For integer moments expand the divisor sums. The natural dominating
series is
\begin{equation}\label{eq:lcmseries}
 \sum_{d_1,\ldots,d_r\ge1}
 \frac{(d_1\cdots d_r)^{-\alpha}}{\lcm(d_1,\ldots,d_r)}.
\end{equation}
It equals the Euler product in \eqref{eq:allmoments}. Indeed, for
each prime the factor is the expectation of
$(\sum_{j=0}^{V_p}p^{-j\alpha})^r$. This factor is
$1+O_r(p^{-1-\alpha})$, since $\Prob(V_p\ge1)=p^{-1}$ and
$\sum_{j=0}^{V_p}p^{-j\alpha}=1+O(p^{-\alpha})$ uniformly in
$V_p\ge1$ for large $p$. Hence the product and
\eqref{eq:lcmseries} converge.

The arithmetic $r$th moment is the same nonnegative sum with
$1/\lcm(d_1,\ldots,d_r)$ replaced by
$\lfloor M/\lcm(d_1,\ldots,d_r)\rfloor/M$. Dominated convergence
proves moment convergence for $Z_m$, uniformly bounded in $M$.
Write $R_m=Z_m(1+\varepsilon_m)$, where $\varepsilon_m\to0$.
For $m$ large enough that $|\varepsilon_m|\le1/2$, the mean value theorem
gives
\[
 |R_m^r-Z_m^r|
 \le c_r|\varepsilon_m|Z_m^r,\qquad
 c_r=r(3/2)^{r-1}.
\]
Let $K_r=\sup_{M\ge1}M^{-1}\sum_{m\le M}Z_m^r<\infty$.
For every sufficiently large fixed $D$,
\[
 \frac1M\sum_{m\le M}|R_m^r-Z_m^r|
 \le O_{D,r}(M^{-1})+
 c_rK_r\sup_{m>D}|\varepsilon_m|.
\]
Taking $M\to\infty$ and then $D\to\infty$ transfers the moment limit
to $R_m$.

For $r=2$, write $d=ga$, $e=gb$, $(a,b)=1$. Formula
\eqref{eq:lcmseries} becomes
\[
 \sum_{g\ge1}g^{-2\alpha-1}
 \sum_{(a,b)=1}a^{-\alpha-1}b^{-\alpha-1}
 =\zeta(2\alpha+1)\frac{\zeta(\alpha+1)^2}{\zeta(2\alpha+2)}.
\]
This proves \eqref{eq:second}; \eqref{eq:mean} was already obtained.
The variance is positive since $2\ph-1<2\ph$ and $\zeta(s)$ is
strictly decreasing for real $s>1$.
\end{proof}

\begin{proposition}[Full support]\label{prop:fullsupport}
The topological support of the limiting law is exactly $[1,\infty)$.
In fact, $Z>1$ almost surely, although $1$ remains in the support.
\end{proposition}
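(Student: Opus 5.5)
We must show three things: $Z>1$ almost surely, $Z\ge1$ always so that the support lies in $[1,\infty)$, and every point of $[1,\infty)$ lies in the support.

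\emph{Almost-sure lower bound.} Each Euler factor in \eqref{eq:randomZ} is at least $1$, so $Z\ge1$ surely, and the support is contained in $[1,\infty)$. For strictness, the events $\{V_p\ge1\}$ are independent with $\sum_p\Prob(V_p\ge1)=\sum_p p^{-1}=\infty$. By the second Borel--Cantelli lemma, almost surely some (indeed infinitely many) $V_p\ge1$. That factor is at least $1+p^{-\alpha}>1$, while all other factors are $\ge1$. Hence $Z>1$ almost surely.

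\emph{Density of the support.} Fix $t\ge1$ and $\varepsilon>0$. We want $\Prob(|Z-t|<\varepsilon)>0$. Write $\log Z=\sum_p\log f_p(V_p)$, where $f_p(j)=\sum_{i\le j}p^{-i\alpha}$. The plan is to choose a finite set $S$ of large primes and force $V_p=1$ for $p\in S$ and $V_p=0$ for all other $p\le y$. For the tail primes $p>y$ we use the event $\sum_{p>y}\log f_p(V_p)<\varepsilon'$, which has probability tending to $1$ as $y\to\infty$. This holds because
\[
\E\sum_{p>y}\log f_p(V_p)\le\sum_{p>y}\E\bigl(f_p(V_p)-1\bigr)\ll\sum_{p>y}p^{-1-\alpha}\to0,
\]
so Markov's inequality applies. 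By independence, the probability of the whole configuration is the product of the positive probabilities of finitely many prescribed values and the tail event. On this event,
\[
\log Z=\sum_{p\in S}\log(1+p^{-\alpha})+O(\varepsilon'),
\]
so it suffices to realize $\log t$ approximately as such a finite sum.

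\emph{Approximating $\log t$.} Since $\log(1+p^{-\alpha})\to0$ and $\sum_p\log(1+p^{-\alpha})=\infty$ (as $\alpha<1$), a greedy choice works. Run through the primes $p>y_0$ in increasing order, where $y_0$ is large enough that each term is $<\varepsilon'$. Include $p$ whenever the running sum stays $\le\log t$. Because each term is smaller than $\varepsilon'$ and the full series diverges, the running sum ends within $\varepsilon'$ of $\log t$ after finitely many primes. The case $t=1$ is covered by taking $S$ empty, or by the limit. Then take $y$ larger than $\max S$. The main subtlety is that $S$ must be finite and fixed before the tail event is chosen. We order the steps as above: first $S$, then $y$, then the tail event. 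Continuity of $\exp$ converts the $O(\varepsilon')$ error in $\log Z$ into the required $\varepsilon$-closeness. Since the support is closed and contains every $t\ge1$, it equals $[1,\infty)$. In particular $1$ is in the support although $\Prob(Z=1)=0$.
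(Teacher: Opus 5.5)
Your proof is correct and follows essentially the same route as the paper. Both use Borel--Cantelli for $Z>1$ almost surely, a greedy finite subsum of the terms $\log(1+p^{-\alpha})$ to approximate $\log t$, and independence with Markov's inequality on the prime tail after prescribing $V_p\in\{0,1\}$ below a cutoff. Your explicit order of choices (first $S$, then $y$, then the tail event) spells out what the paper compresses into a single sentence.
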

\begin{proof}
For each prime put $a_p=\log(1+p^{-\alpha})$. Then $a_p\to0$ and
$\sum_pa_p=\infty$ by the prime number theorem, since $\alpha<1$. The
elementary greedy subsum
argument therefore shows that the finite subsums of the $a_p$ are dense
in $[0,\infty)$. Each such subsum is the logarithm of a finite-prime
product obtained by prescribing $V_p\in\{0,1\}$ at finitely many primes,
an event of positive probability.

For all sufficiently large $p$,
\[
 0\le \log(1+p^{-\alpha}+\cdots+p^{-\alpha V_p})
 \le2p^{-\alpha}\ind_{\{V_p\ge1\}}.
\]
Consequently, the expected logarithmic tail beyond $P$ is
$O(\sum_{p>P}p^{-1-\alpha})=o(1)$. After prescribing any finite pattern,
one may prescribe zeros up to a larger cutoff and use Markov's inequality
on the independent remaining tail. Thus every neighborhood of every
finite product, and hence of every point of $[1,\infty)$, has positive
probability. The reverse inclusion follows from $Z\ge1$. Finally,
$\sum_p\Prob(V_p\ge1)=\sum_p1/p=\infty$, so independence and the second
Borel--Cantelli lemma give $Z>1$ almost surely.
\end{proof}

\begin{theorem}[Singular continuity]\label{thm:singular}
The law of $Z$ is singular continuous: it has no atoms and is
concentrated on a set of Lebesgue measure zero.
\end{theorem}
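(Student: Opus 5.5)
The plan is to write $\log Z=\sum_p X_p$, where $X_p=\log\bigl(1+p^{-\alpha}+\cdots+p^{-\alpha V_p}\bigr)$ are independent and each takes countably many values. The law of $\log Z$, and hence of $Z$, is then the infinite convolution of purely atomic laws. The sum converges almost surely by Theorem~\ref{thm:distribution}, and the bound $0\le X_p\le 2p^{-\alpha}\ind_{\{V_p\ge1\}}$ from the proof of Proposition~\ref{prop:fullsupport} controls the tails.

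\textbf{Step 1 (purity).} I would invoke the Jessen--Wintner purity theorem. A convergent infinite convolution of discrete laws is of pure type: purely atomic, purely singular continuous, or absolutely continuous. The theorem then reduces to two claims: there are no atoms, and the law is not absolutely continuous. If I wanted to avoid quoting purity, the same structure is what I would use directly. For the Lebesgue decomposition, the tail $\sigma$-field argument (Kolmogorov's 0--1 law applied to the events ``$\log Z$ lies in a fixed null set shifted by the head'') yields purity.

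\textbf{Step 2 (no atoms).} I would use Lévy's criterion. An infinite convolution of discrete laws is continuous if and only if $\sum_p\bigl(1-\max_x\Prob(X_p=x)\bigr)=\infty$. Here the largest atom of $X_p$ is $\Prob(V_p=0)=1-p^{-1}$, so the series is $\sum_p p^{-1}=\infty$. Alternatively, I would argue directly. Condition on all $V_q$ with $q\ne p$; a fixed value $z$ then forces $X_p$ into at most one value. Iterating over a growing set of primes bounds $\Prob(Z=z)$ by $\prod_{p\le P}(1-p^{-1})\to0$. This uses injectivity of $j\mapsto\log(1+\cdots+p^{-j\alpha})$ and independence.

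\textbf{Step 3 (singularity), the main obstacle.} I must exhibit a Lebesgue-null set of full measure, or equivalently show that absolute continuity fails. My first attempt is a covering argument at two scales $Q<P$. Split $\log Z=H+M+T$, where $H$ runs over $p\le Q$, $M$ over $Q<p\le P$, and $T$ over $p>P$.
\begin{itemize}
\item The tail satisfies $\E T\ll P^{-\alpha}/\log P$. By Markov's inequality, $T$ lies in a tiny interval with probability close to $1$.
\item With high probability the head has $V_p\le 1$ for most $p$ and few primes dividing. It is therefore confined to a set of cardinality $N(Q)$.
\item The middle range has $\Prob(\text{some } p\in(Q,P] \text{ occurs})\approx\log(\log P/\log Q)$. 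For this to be small I need $\log P\approx(1+\epsilon)\log Q$.
\end{itemize}
The covering has total length about $N(Q)\,|\text{window}|$, which must tend to $0$. I expect the naive counting $N(Q)\approx\exp(O(\log Q\log\log Q))$ to be too crude against a window of length $Q^{-\alpha(1+\epsilon)}$. The refinement I would try next is to exploit the divergence of $\sum_p p^{-1}$ more cleverly. Iterating the decomposition along a lacunary sequence of scales, I would show that the conditional law of $\log Z$ given the head is repeatedly concentrated. That would give local dimension $0$: $\liminf_{r\to0}\log\mu(B(x,r))/\log r=0$ for $\mu$-a.e.\ $x$, which is incompatible with absolute continuity.

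If that fails, my fallback is the Fourier route. I would estimate $|\E e^{it\log Z}|=\prod_p|\E e^{itX_p}|$ along a sparse sequence of $t$ adapted to the lattice of values $p^{-\alpha}$. The aim is $\limsup_{t\to\infty}|\E e^{it\log Z}|>0$, which contradicts the Riemann--Lebesgue lemma. Combined with Steps 1--2, this gives singular continuity.
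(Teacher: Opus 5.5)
Your Steps 1 and 2 coincide with the paper's argument. The paper also applies the Jessen--Wintner pure-type theorem to $Y=\log Z=\sum_pX_p$. It excludes atoms by L\'evy's criterion, using that the largest atom of $X_p$ is $\Prob(V_p=0)=1-p^{-1}$ and that $\sum_p p^{-1}=\infty$.

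Your ``direct'' alternative for Step~2 does not work as written. Conditioning on $V_q$ for $q\notin S$, you would need $\Prob(\sum_{p\in S}X_p=c)\le\prod_{p\in S}(1-p^{-1})$. In general, however, the largest atom of a convolution is bounded only by the minimum of the largest atoms, not their product. Injectivity of $j\mapsto\log\sigma_{-\alpha}(p^j)$ for each single $p$ does not give injectivity of $m\mapsto\sigma_{-\alpha}(m)$ on $S$-smooth integers. Keep L\'evy's theorem.

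The genuine gap is Step~3, the only nonclassical part of the proof, which your proposal does not carry out. You abandon the two-scale covering as too crude, and the lacunary ``local dimension $0$'' refinement is not substantiated. The Fourier fallback is unsupported: singular measures can be Rajchman. Heuristically, equidistribution of $tp^{-\alpha}$ modulo $2\pi$ over dyadic ranges of primes $p\le t^{1/\alpha}$ suggests that $\E e^{itY}\to0$, so that route is unlikely to succeed.

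The missing idea is that, once purity is known, you only need sets of vanishing Lebesgue measure carrying mass bounded \emph{below}, not mass close to $1$. Your insistence on high-probability confinement of the head is what inflates the count $N(Q)$ and forces the costly middle range.

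The paper uses a single cutoff $y$ and sets $M_y=\prod_{p\le y}p^{V_p}$. It observes that $\Prob(M_y=m)=c_y/m$ exactly for every $y$-smooth $m$, where $c_y=\prod_{p\le y}(1-p^{-1})$. Every $m\le y^{\alpha/4}$ is $y$-smooth, so Mertens' formula gives
\[
 \Prob(M_y\le y^{\alpha/4})=c_y\sum_{m\le y^{\alpha/4}}\frac1m\longrightarrow e^{-\gamma_E}\frac{\alpha}{4}>0 .
\]
On this event the head $Y_y=\log\sigma_{-\alpha}(M_y)$ takes at most $y^{\alpha/4}$ values. Meanwhile $\E(Y-Y_y)\ll y^{-\alpha}$, so Markov's inequality puts the tail in $[0,y^{-\alpha/2}]$ outside an event of probability $O(y^{-\alpha/2})$. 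Thus a union of at most $y^{\alpha/4}$ intervals of length $y^{-\alpha/2}$ has total measure at most $y^{-\alpha/4}\to0$. Yet it carries probability bounded away from $0$, which contradicts absolute continuity.

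The ingredient your plan lacks is this balance between $c_y\asymp1/\log y$ and $\sum_{m\le y^\beta}1/m\sim\beta\log y$ for a small fixed $\beta<\alpha/2$. It is available within your own head/tail framework once you drop the high-probability requirement and the middle range.
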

\begin{proof}
Set
\[
 X_p=\log\left(\sum_{j=0}^{V_p}p^{-j\alpha}\right),\qquad
 Y=\log Z=\sum_pX_p.
\]
By Theorem~\ref{thm:distribution}, this sum of independent discrete
random variables converges almost surely. The Jessen--Wintner
pure-type theorem \cite[Theorem~35]{jessenwintner} therefore makes
its law either discrete, singular continuous, or absolutely continuous;
the additive-function formulation is also stated in
\cite[\S13(iv), p.~720]{erdoswintner}.

The values of $X_p$ strictly increase with $V_p$. Its largest atom is
therefore the one at zero, of mass
\[
 d_p=\sup_{x\in\mathbb R}\Prob(X_p=x)=\Prob(V_p=0)=1-\frac1p.
\]
L\'evy's continuity criterion for a convergent infinite convolution of
discrete laws \cite[Th\'eor\`eme~XIII]{levy}, as reproduced in
\cite[\S13(v), p.~721]{erdoswintner}, excludes atoms precisely when
the product of their largest atom masses is zero. Here
\[
 \prod_p d_p=0,\qquad\text{since}\qquad
 \sum_p(1-d_p)=\sum_p\frac1p=\infty.
\]
Thus the law of $Y$ has no atoms.

To exclude absolute continuity, we adapt the covering argument of
Erd\H{o}s \cite[\S3]{erdossmoothness}, retaining the full prime-power
model. Put $f(n)=\log\sigma_{-\alpha}(n)$ and, for $y\ge2$, let
\[
 M_y=\prod_{p\le y}p^{V_p},\qquad
 Y_y=\log\sigma_{-\alpha}(M_y),\qquad
 c_y=\prod_{p\le y}(1-p^{-1}).
\]
For every integer $m$ whose prime factors are at most $y$,
independence gives $\Prob(M_y=m)=c_y/m$ exactly. Every
$m\le y^{\alpha/4}$ has this property. Mertens' product formula
\cite{mertens} and the harmonic-sum asymptotic therefore give
\[
 \Prob(M_y\le y^{\alpha/4})
 =c_y\sum_{m\le y^{\alpha/4}}\frac1m
 \longrightarrow e^{-\gamma_E}\frac{\alpha}{4}>0,
\]
where $\gamma_E$ is Euler's constant. On the other hand,
\[
 0\le\E(Y-Y_y)\ll\sum_{p>y}p^{-1-\alpha}\ll y^{-\alpha}.
\]
Markov's inequality shows that $Y-Y_y\le y^{-\alpha/2}$ except on
an event of probability $O(y^{-\alpha/2})$. Consequently the set
\[
 H_y=\bigcup_{m\le y^{\alpha/4}}
 [\,f(m),\,f(m)+y^{-\alpha/2}\,]
\]
has Lebesgue measure at most $y^{-\alpha/4}\to0$, whereas
$\liminf_{y\to\infty}\Prob(Y\in H_y)>0$. This contradicts absolute
continuity, whose integrable density would give uniformly vanishing
mass to sets of vanishing Lebesgue measure. The pure-type theorem
therefore makes the law of $Y$ singular continuous.
Exponentiation preserves atoms and maps null sets to null sets on
each bounded interval, so the same conclusion holds for $Z$.
\end{proof}

Singularity and full support are compatible: every nonempty open
subinterval of $[1,\infty)$ has positive probability, but almost all
the probability is carried by a Lebesgue-null subset. The classical
pure-type theorem is used here only to establish this refinement;
the preceding convergence and moment arguments are direct.

\begin{proposition}[Real Mellin moments]\label{prop:mellin}
For every real $t$, the quantity
\begin{equation}\label{eq:mellinproduct}
 \mathcal M(t)=\E Z^t
 =\prod_p\left[(1-p^{-1})\sum_{j=0}^{\infty}
 \frac{(1+p^{-\alpha}+\cdots+p^{-j\alpha})^t}{p^j}\right]
\end{equation}
is finite, and
\begin{equation}\label{eq:realmellin}
 \lim_{M\to\infty}\frac1M\sum_{m\le M}R_m^t=\mathcal M(t).
\end{equation}
\end{proposition}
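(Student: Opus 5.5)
Our plan has three steps. First we show that $\mathcal M(t)$ is finite for every real $t$. Next we prove convergence of the $t$-th moments of $Z_m$. Finally we transfer the result to $R_m$ through Proposition~\ref{prop:uniform}.

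\emph{Finiteness.} Write $S_p=\sum_{j=0}^{V_p}p^{-j\alpha}$. Then $1\le S_p\le(1-p^{-\alpha})^{-1}$, so $S_p^t$ is bounded between $1$ and $(1-p^{-\alpha})^{-|t|}$ in every case. Using $\Prob(V_p\ge1)=p^{-1}$, each Euler factor in \eqref{eq:mellinproduct} equals
\[
1+O_t\bigl(p^{-1}\cdot p^{-\alpha}\bigr)=1+O_t(p^{-1-\alpha})
\]
for large $p$. This holds for both signs of $t$, because $S_p^t-1=O_t(p^{-\alpha})$ on $\{V_p\ge1\}$. Since $\sum_pp^{-1-\alpha}<\infty$, the product converges absolutely. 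Independence, together with monotone convergence in the finite-prime truncations, then identifies the product with $\E Z^t$. For $t\ge0$ the truncations increase to $Z^t$. For $t<0$ they decrease, are bounded by $1$, and dominated convergence applies.

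\emph{Arithmetic moments of $Z_m$.} From Theorem~\ref{thm:distribution} we already know $Z_{J_M}\Rightarrow Z$. Convergence of $\frac1M\sum_{m\le M}Z_m^t$ will follow once $Z_{J_M}^t$ is uniformly integrable. For $t\le0$ this is automatic: $0<Z_m^t\le1$, and $x\mapsto x^t$ is bounded and continuous on $[1,\infty)$. For $t>0$ we pick an integer $r>t$. The integer moment bound $K_r=\sup_M M^{-1}\sum_{m\le M}Z_m^r<\infty$, established in the proof of Theorem~\ref{thm:distribution}, then gives uniform integrability of $Z_m^t$. Weak convergence combined with uniform integrability yields
\[
\frac1M\sum_{m\le M}Z_m^t\to\E Z^t .
\]

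\emph{Transfer to $R_m$.} Write $R_m=Z_m(1+\varepsilon_m)$ with $\varepsilon_m\to0$, which is Proposition~\ref{prop:uniform}. Once $|\varepsilon_m|\le1/2$, the mean value theorem gives
\[
|R_m^t-Z_m^t|\le c_t|\varepsilon_m|Z_m^t,\qquad c_t=|t|\max\bigl((1/2)^{t-1},(3/2)^{t-1}\bigr).
\]
We then argue exactly as in the integer case. The finitely many $m\le D$ contribute $O_{D,t}(M^{-1})$. The remaining $m$ contribute at most $c_t\sup_{m>D}|\varepsilon_m|\cdot\sup_M M^{-1}\sum_{m\le M}Z_m^t$, and this supremum is finite: it is at most $1$ for $t\le0$, and bounded via $Z_m^t\le Z_m^r$ for $0<t\le r$. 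Letting $M\to\infty$ and then $D\to\infty$ proves \eqref{eq:realmellin}.

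\emph{Main obstacle.} The step needing the most care is the uniform integrability for non-integer positive $t$. We handle it by dominating with the next integer moment rather than expanding divisor sums, which would not be available for real exponents. Negative $t$ is harmless because $Z_m\ge1$ and $R_m$ stays bounded away from $0$ eventually. The finitely many small $m$ are absorbed into the $O(M^{-1})$ term.
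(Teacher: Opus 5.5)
Your proof is correct and follows essentially the paper's argument. Both establish the Euler factorization by monotone or dominated convergence over finite-prime truncations, then obtain the limit from weak convergence plus uniform integrability supplied by the next integer moment for $t>0$, with $Z_m\ge1$ making $x^t$ a bounded test function for $t\le0$.

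The differences are organizational:
\begin{itemize}
\item You get finiteness from the factor estimate $1+O_t(p^{-1-\alpha})$, whereas the paper uses $Z^t\le1+Z^r$.
\item You prove the limit for $Z_m$ first and then transfer it with the mean-value bound. The paper instead applies weak convergence and the bounded integer moments of $R_m$, both already proved in Theorem~\ref{thm:distribution}, directly to $R_m$, so no separate transfer step is needed.
\item There is one harmless slip: for $t<0$, $S_p^t$ lies in $[(1-p^{-\alpha})^{|t|},1]$, not between $1$ and $(1-p^{-\alpha})^{-|t|}$. The bound $S_p^t-1=O_t(p^{-\alpha})$ on $\{V_p\ge1\}$, which is what you actually use, is still correct.
\end{itemize}
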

\begin{proof}
If $t\ge0$, choose an integer $r>t$ and use $1\le Z^t\le1+Z^r$;
Theorem~\ref{thm:distribution} makes the right side integrable. If $t<0$,
then $0<Z^t\le1$. Finite-prime products converge almost surely to $Z$,
and monotone convergence for $t\ge0$, or dominated convergence for $t<0$,
gives the factorization \eqref{eq:mellinproduct}.

For positive $t$, weak convergence together with the uniformly bounded
higher integer moments gives uniform integrability and
\eqref{eq:realmellin}. For negative $t$, use $Z_m\ge1$ and
$R_m/Z_m\to1$: eventually $R_m\ge1/2$, so the test function is bounded on
the relevant range. Replacing $x^t$ below $1/2$ by a bounded continuous
extension makes the weak-convergence argument literal without changing
the tail of the arithmetic sequence. The case $t=0$ is immediate.
\end{proof}

\section{Sampling through the self-generated clock}\label{sec:sampling}
\subsection{The size-biased limiting law}
Sampling a block label uniformly is different from sampling a sequence
position uniformly. The latter visits a block in proportion to its
length, so it preserves more of the large-divisor fluctuations.

\begin{theorem}[Position-sampled distribution]\label{thm:sizebias}
Let $U_N$ be uniform on $\{1,\ldots,N\}$ and set
\[
 X_N=\frac{B(A(U_N))}{C A(U_N)^\alpha}.
\]
Then $X_N$ converges in distribution to the size-biased version $Z^*$
of $Z$, characterized by
\begin{equation}\label{eq:sizebias}
 \E[h(Z^*)]=\frac{\E[Z h(Z)]}{\zeta(\ph)}
\end{equation}
for bounded continuous $h$. For every real $t$, the Mellin moments also
converge:
\begin{equation}\label{eq:sizebiasallmoments}
 \lim_{N\to\infty}\E X_N^t
 =\E (Z^*)^t
 =\frac{\mathcal M(t+1)}{\mathcal M(1)}
 =\frac{\mathcal M(t+1)}{\zeta(\ph)}.
\end{equation}
In particular, its first moment is
\begin{equation}\label{eq:sizebiasedmean}
 \E Z^*=\frac{\zeta(\ph)\zeta(2\ph-1)}{\zeta(2\ph)}
 =2.830405204114929048\ldots.
\end{equation}
\end{theorem}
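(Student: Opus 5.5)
Write $M_N=A(N)$, so that $F(M_N-1)<N\le F(M_N)$. Positions in block $m$ all carry the same value $R_m$, so the law of $X_N$ is determined by
\[
 \E h(X_N)=\frac1N\Bigl(\sum_{m<M_N}B(m)h(R_m)+\bigl(N-F(M_N-1)\bigr)h(R_{M_N})\Bigr).
\]
The incomplete final block contributes at most $\|h\|_\infty B(M_N)/N$. By the power envelope, $B(m)\ll\sigma_\alpha(m)\ll m$, and $N\gg M_N^{\ph}$ by Theorem~\ref{thm:main}. This contribution is therefore $O(M_N^{1-\ph})\to0$. It remains to show that the complete-block average
\[
 \frac{1}{F(M)}\sum_{m\le M}B(m)h(R_m)
\]
tends to $\E[Zh(Z)]/\zeta(\ph)$ as $M\to\infty$. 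This suffices because $F(M_N-1)/N\to1$ by the same estimate.

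Write $B(m)=Cm^\alpha R_m$. By Theorem~\ref{thm:main}, $F(M)\sim C\,(\zeta(\ph)/\ph)M^\ph$. The target quantity is thus
\[
 \frac{\ph}{\zeta(\ph)M^\ph}\sum_{m\le M}m^\alpha R_mh(R_m).
\]
Set $g(x)=xh(x)$ and $S(M)=\sum_{m\le M}g(R_m)$. Abel summation gives
\[
 \sum_{m\le M}m^\alpha g(R_m)=M^\alpha S(M)-\int_1^M\alpha t^{\alpha-1}S(t)\,dt .
\]
Suppose $S(t)=t\,\E g(Z)+o(t)$. The integral then equals $\E g(Z)\bigl(\tfrac{\alpha}{\ph}M^\ph+o(M^\ph)\bigr)$, and the whole expression is $\E g(Z)\,M^\ph/\ph+o(M^\ph)$. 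After normalization we obtain exactly $\E[Zh(Z)]/\zeta(\ph)$, as required.

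The main obstacle is that $g(x)=xh(x)$ is continuous but unbounded, so Theorem~\ref{thm:distribution} does not apply to it directly. The plan is a truncation argument. Uniform integrability comes from Theorem~\ref{thm:distribution}: the second moments $M^{-1}\sum_{m\le M}R_m^2$ are bounded, hence
\[
 M^{-1}\sum_{m\le M}R_m\ind_{\{R_m>T\}}\le T^{-1}\sup_{M}M^{-1}\sum_{m\le M}R_m^2 .
\]
This tail is small uniformly in $M$, and $\E[Z\ind_{\{Z>T\}}]$ is small by the same bound. Cutting $g$ continuously at level $T$ and applying the weak convergence \eqref{eq:weak} then gives $S(M)/M\to\E g(Z)$. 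Note that the Abel-summation step needs no uniformity beyond this $o(t)$ statement, because $t^{\alpha-1}$ is integrable near infinity relative to $M^\ph$.

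Since $h$ is an arbitrary bounded continuous function, this also shows that the right-hand side of \eqref{eq:sizebias} is a probability law with total mass $\E Z/\zeta(\ph)=1$, so $Z^*$ is well defined and $X_N\Rightarrow Z^*$.

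For the Mellin moments, apply the same decomposition with $h(x)=x^t$. The complete-block average then requires $M^{-1}\sum_{m\le M}R_m^{t+1}\to\mathcal M(t+1)$, which is Proposition~\ref{prop:mellin}, combined with the same Abel summation. For the incomplete final block, note that $R_m^t=m^{o(1)}$ by Theorem~\ref{thm:maximal} (for $t<0$, use $R_m\ge1/2$ eventually). The block then contributes $O(M_N^{1-\ph+o(1)})\to0$.

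This gives $\E(Z^*)^t=\mathcal M(t+1)/\mathcal M(1)$, with $\mathcal M(1)=\zeta(\ph)$ by \eqref{eq:mean}. Finally, the first moment \eqref{eq:sizebiasedmean} is $\E Z^2/\E Z$, and substituting \eqref{eq:second} and \eqref{eq:mean} gives $\zeta(\ph)\zeta(2\ph-1)/\zeta(2\ph)$.
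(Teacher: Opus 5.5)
Your proposal is correct and follows essentially the paper's route: exact block counting with the incomplete final block controlled by $B(M)\ll M=o(F(M))$, uniform integrability of $R_m h(R_m)$ from the bounded second moments, partial summation against $m^\alpha$ combined with $F(M)\sim C\zeta(\ph)M^\ph/\ph$, and Proposition~\ref{prop:mellin} together with Theorem~\ref{thm:maximal} for the real Mellin moments. One cosmetic slip: $t^{\alpha-1}$ is not integrable at infinity; the Abel-summation error is $o(M^\ph)$ simply because $|S(t)-t\,\E g(Z)|\le\varepsilon t$ for all large $t$ and $\int_1^M\alpha t^{\alpha}\,dt\le M^{\ph}$.
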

\begin{proof}
First take $N=F(M)$. Exact block counting gives
\begin{equation}\label{eq:sizebiasexact}
 \E[h(X_{F(M)})]
 =\frac1{F(M)}\sum_{m\le M}B(m)h(R_m)
 =\frac C{F(M)}\sum_{m\le M}m^\alpha R_m h(R_m).
\end{equation}
The weak convergence and bounded second moments from
Theorem~\ref{thm:distribution} imply uniform integrability of the
first moments. Explicitly, for $T>0$,
\[
 \frac1M\sum_{\substack{m\le M\\R_m>T}}|R_mh(R_m)|
 \le\frac{\|h\|_\infty}{TM}\sum_{m\le M}R_m^2=O_h(T^{-1}),
\]
uniformly in $M$. Truncating first and then letting $T\to\infty$ gives
\[
 \frac1M\sum_{m\le M}R_m h(R_m)\longrightarrow\E[Zh(Z)].
\]
If a sequence $g_m$ has Ces\`aro mean $L$, partial summation gives
\begin{equation}\label{eq:weightedcesaro}
 \sum_{m\le M}m^\alpha g_m
 =\frac{L}{\ph}M^\ph+o(M^\ph).
\end{equation}
Apply this to the last display and use
$F(M)\sim C\zeta(\ph)M^\ph/\ph$. This proves
\eqref{eq:sizebias} along endpoints.

For sufficiently large $N$ put $M=A(N)$ and $r=N-F(M-1)$, so
$1\le r\le B(M)$. The exact decomposition
\begin{equation}\label{eq:partialblockmixture}
 \E h(X_N)=\frac{F(M-1)}N\E h(X_{F(M-1)})
             +\frac rN h(R_M)
\end{equation}
shows directly that bounded test functions have the same limit: indeed,
$B(M)\ll M=o(F(M))$.

For a real $t$, apply \eqref{eq:weightedcesaro} at complete endpoints
with $g_m=R_m^{t+1}$ and use Proposition~\ref{prop:mellin}. This proves
\eqref{eq:sizebiasallmoments} along $N=F(M)$. In
\eqref{eq:partialblockmixture}, now with $h(x)=x^t$, the final term is
harmless for $t<0$ because eventually $R_M\ge1/2$. For $t\ge0$, its
absolute size is at most
\[
 \frac{B(M)R_M^t}{N}
 \ll\frac{R_M^{t+1}}M=M^{-1+o(1)}
\]
by Theorem~\ref{thm:maximal}. The changed normalization is accounted for
by the first coefficient in \eqref{eq:partialblockmixture}, which tends to
$1$. This proves all real moment limits. Formula
\eqref{eq:sizebiasedmean} follows from \eqref{eq:second} divided by
\eqref{eq:mean}.
\end{proof}

This gives two distinct, explicit constants for the same local
quantity, depending on whether one samples values or positions.
The global time change is generated by the sequence itself; the
limiting local law remains arithmetic. Moreover, $Z^*$ is also singular
continuous with support $[1,\infty)$: by \eqref{eq:sizebias}, its law has
the strictly positive density $z/\zeta(\ph)$ relative to the law of $Z$,
so the two measures have the same null sets and the same support.

\subsection{Residue classes and prime values at sequence positions}
The distribution of block labels seen from positions also has exact
arithmetic densities. These are position-sampled densities, not densities
under uniform sampling of the block label $m$.

\begin{theorem}[Residue-class densities]\label{thm:residues}
Fix $q\ge2$ and a residue class $a\pmod q$. Then
\begin{equation}\label{eq:residuedensitydef}
 \delta_{q,a}:=\lim_{N\to\infty}\frac1N
 \#\{n\le N:A(n)\equiv a\pmod q\}
\end{equation}
exists and equals
\begin{equation}\label{eq:residuedensity}
 \delta_{q,a}=\frac1q\sum_{g\mid\gcd(a,q)}g^{1-\ph}
 \prod_{p\mid q/g}(1-p^{-\ph}),
\end{equation}
where $\gcd(0,q)=q$. In particular, if $\phi_{\!E}$ denotes Euler's
totient function, then
\begin{equation}\label{eq:divisibilitydensity}
 \operatorname{dens}\{n:q\mid A(n)\}
 =\delta_{q,0}=\frac1q\sum_{d\mid q}\frac{\phi_{\!E}(d)}{d^\ph}.
\end{equation}
The function $q\mapsto\delta_{q,0}$ is multiplicative. Consequently,
divisibility of $A(n)$ by fixed coprime moduli is asymptotically
independent. For example,
\begin{align}
 \operatorname{dens}\{n:A(n)\text{ is even}\}
 &=\frac{1+2^{-\ph}}2=0.6628895560\ldots,\label{eq:evendensity}\\
 \operatorname{dens}\{n:A(n)\text{ is odd}\}
 &=\frac{1-2^{-\ph}}2=0.3371104439\ldots.\label{eq:odddensity}
\end{align}
Every reduced residue class modulo $q$ has density
$q^{-1}\prod_{p\mid q}(1-p^{-\ph})$.
\end{theorem}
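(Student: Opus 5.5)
We reduce position sampling of residue classes to a weighted block count, exactly as in the proof of Theorem~\ref{thm:sizebias}. Along endpoints $N=F(M)$,
\[
 \#\{n\le F(M):A(n)\equiv a\ (\mathrm{mod}\ q)\}=\sum_{\substack{m\le M\\ m\equiv a\ (q)}}B(m),
\]
and the incomplete final block costs at most $B(M)\ll M=o(F(M))$. So it suffices to show
\[
 \sum_{m\le M,\ m\equiv a}B(m)\sim \delta_{q,a}F(M)\sim \delta_{q,a}\,C\zeta(\ph)M^\ph/\ph .
\]
Rather than invoking the weak-limit machinery, I would work with the exact divisor expansion. Writing $m=dk$,
\[
 \sum_{\substack{m\le M\\ m\equiv a}}B(m)=\sum_{d\le M}A(d)\,\#\{k\le M/d:\ dk\equiv a\ (\mathrm{mod}\ q)\}.
\]
The inner count is $\ind_{g\mid a}\,(g/q)(M/d)+O(1)$, where $g=\gcd(d,q)$. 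The $O(1)$ errors sum to $\sum_{d\le M}A(d)\ll M^{\ph}/M^{0}$ only if we are careless. In fact $\sum_{d\le M}A(d)\ll M^{1+\alpha}=M^\ph$, which is the same order as the main term. So the step where the error must be handled more carefully is this one. I would split at $d\le M/Q$ versus $d>M/Q$ for a large fixed $Q$:
\begin{itemize}
\item for $d\le M/Q$, the $O(1)$ error is relatively $O(d/M)\le O(1/Q)$ per term, contributing $O(M^\ph/Q)$;
\item for $d>M/Q$, the whole contribution is at most $\sum_{M/Q<d\le M}A(d)\lfloor M/d\rfloor$, which is bounded in the same way by the $q\le Q$ part.
\end{itemize}
Cleaner still, mirror Section~\ref{sec:kernel}: write the sum as $\sum_{k}\sum_{d\le M/k,\ dk\equiv a}A(d)$. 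For each fixed $k$, the inner sum is $\sim C\,\rho_k\,(M/k)^\ph/\ph$, where $\rho_k$ is the proportion of residues $d$ mod $q$ with $dk\equiv a$. Here we use $A(d)\sim Cd^\alpha$ together with partial summation over an arithmetic progression, via \eqref{eq:weightedcesaro} with $g_d$ the periodic indicator. Dominated convergence over $k$, with majorant $Mk^{-\ph}$-type bounds from Lemma~\ref{lem:envelope}, then gives
\[
 \sum_{m\le M,\ m\equiv a}B(m)\sim\frac{C M^\ph}{\ph}\sum_{k\ge1}\rho_k k^{-\ph}.
\]
This is the main analytic step, and it is the same domination as in \eqref{eq:Tlimit}.

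It remains to evaluate $S=\sum_k\rho_k k^{-\ph}$ and to show $S/\zeta(\ph)$ equals \eqref{eq:residuedensity}. Let $g=\gcd(k,q)$. The congruence $dk\equiv a$ is solvable iff $g\mid a$, and then has $g$ solutions mod $q$, so $\rho_k=\ind_{g\mid a}\,g/q$. Grouping by $g\mid\gcd(a,q)$ and writing $k=gk'$ with $\gcd(k',q/g)=1$ gives
\[
 S=\frac1q\sum_{g\mid\gcd(a,q)} g\cdot g^{-\ph}\,\zeta(\ph)\prod_{p\mid q/g}(1-p^{-\ph}).
\]
Dividing by $\zeta(\ph)$ yields \eqref{eq:residuedensity}.

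The remaining claims are then consequences of this formula:
\begin{itemize}
\item For $a=0$, the sum over $g\mid q$ of $g^{1-\ph}\prod_{p\mid q/g}(1-p^{-\ph})$ is a Dirichlet convolution of multiplicative functions. Checking it on prime powers gives $\sum_{d\mid q}\phi_E(d)d^{-\ph}$, and multiplicativity follows, hence independence for coprime moduli via CRT, applying the general formula to moduli $q_1q_2$.
\item The parity values come from $q=2$.
\item For reduced classes only $g=1$ contributes.
\end{itemize}
Finally, extend from endpoints to all $N$ by \eqref{eq:partialblockmixture}-style interpolation.
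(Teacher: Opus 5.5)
Your proof is correct. Its arithmetic core is the paper's: $dk\equiv a\pmod q$ is soluble in $d$ exactly when $g=\gcd(k,q)$ divides $a$, the proportion is $g/q$, and grouping $k=gk'$ with $\gcd(k',q/g)=1$ gives \eqref{eq:residuedensity}.

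The analytic step is organized differently.

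\textbf{The paper's route.} It first replaces $B(m)$ by $C\sigma_\alpha(m)$ using the uniform comparison of Proposition~\ref{prop:uniform}, at total cost $o(M^\ph)$. It then evaluates $\sum_{m\le M,\,m\equiv a}\sigma_\alpha(m)$ with an explicit remainder $O_q(M)$ coming from progression sums of $k^\alpha$.

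\textbf{Your route.} You never pass through that pointwise comparison. You keep the exact expansion $\sum_k\sum_{d\le M/k,\,dk\equiv a}A(d)$ and use $A(d)\sim Cd^\alpha$ only inside each fixed-$k$ progression sum. You then interchange limit and sum by dominated convergence with the envelope majorant $\ll k^{-\ph}$, exactly as in \eqref{eq:Tlimit}.

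\textbf{What each buys.} Because you average before comparing, the same main-term computation goes through for any kernel with $W(\ph)<\infty$, with majorant $\ll w(k)k^{-\ph}$. The paper's route keeps an explicit error in the main-term computation. Combined with the quantitative form of Proposition~\ref{prop:uniform}, it would also give a rate for the densities.

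Your derivation of \eqref{eq:divisibilitydensity} by multiplicativity and a prime-power check is a valid alternative to the paper's use of $(e,q)=\sum_{d\mid(e,q)}\phi_{\!E}(d)$.

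You should, however, delete the first attempted split at $d=M/Q$. The range $d>M/Q$ corresponds to bounded complementary divisors and carries the main term, of order $M^\ph$. There $M/d<Q$, so the $O(1)$ error in counting $k\le M/d$ is comparable to the count itself. That range is therefore not controlled ``in the same way''. Your fixed-$k$ argument handles it correctly, so the proof does not depend on that paragraph.
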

\begin{proof}
At a complete endpoint, the numerator in \eqref{eq:residuedensitydef} is
\[
 \sum_{\substack{m\le M\\m\equiv a\ (\mathrm{mod}\ q)}}B(m).
\]
Proposition~\ref{prop:uniform} permits $B(m)$ to be replaced uniformly by
$C\sigma_\alpha(m)$ with total error $o(M^\ph)$. Write $m=ek$ in the
divisor sum. If $g=(e,q)$, the congruence $ek\equiv a\pmod q$ is soluble
exactly when $g\mid a$; in that case $k$ occupies one residue class modulo
$q/g$. Uniformly for the finitely many possible moduli,
\[
 \sum_{\substack{k\le x\\ek\equiv a\ (\mathrm{mod}\ q)}}k^\alpha
 =\frac{g}{q\ph}x^\ph+O_q(x^\alpha+1).
\]
It follows that
\begin{equation}\label{eq:sigmainprogression}
 \sum_{\substack{m\le M\\m\equiv a\ (\mathrm{mod}\ q)}}
 \sigma_\alpha(m)
 =\frac{M^\ph}{\ph q}
 \sum_{\substack{e\ge1\\(e,q)\mid a}}\frac{(e,q)}{e^\ph}
 +O_q(M).
\end{equation}
Indeed, the summed progression errors are
$O_q(M^\alpha\sum_{e\le M}e^{-\alpha}+M)=O_q(M)$, and extending the
main sum past $M$ also costs $O_q(M)$. Division by
$F(M)\sim C\zeta(\ph)M^\ph/\ph$ gives the first expression
\[
 \delta_{q,a}=\frac1{q\zeta(\ph)}
 \sum_{\substack{e\ge1\\(e,q)\mid a}}\frac{(e,q)}{e^\ph}.
\]
Partitioning by $g=(e,q)$ and using
$\sum_{(r,q/g)=1}r^{-\ph}=\zeta(\ph)
\prod_{p\mid q/g}(1-p^{-\ph})$ proves \eqref{eq:residuedensity}.
For arbitrary $N$, the incomplete block has size
$B(A(N))=O(A(N))=o(N)$, so it does not change the limit.

The identity $(e,q)=\sum_{d\mid(e,q)}\phi_{\!E}(d)$ gives
\eqref{eq:divisibilitydensity}; multiplicativity and the remaining
specializations follow at once.
\end{proof}

The prime-number theorem produces a complementary zero-density result
with an explicit first-order count.

\begin{corollary}[Positions carrying prime values]\label{cor:primepositions}
Let
\[
 \mathcal P_A(N)=\#\{n\le N:A(n)\text{ is prime}\}.
\]
Then
\begin{equation}\label{eq:primepositions}
 \mathcal P_A(N)\sim\frac{\ph}{\zeta(\ph)}\frac{N}{\log N}
 =0.7228741287\ldots\frac{N}{\log N}.
\end{equation}
\end{corollary}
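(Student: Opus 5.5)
We count positions block by block. With $M=A(N)$, the positions $n\le N$ carrying a prime value are exactly those in the complete blocks $r$ with $r<M$ prime, plus possibly part of block $M$. Hence
\[
 \sum_{\substack{r\le M-1\\ r\ \mathrm{prime}}}B(r)\le\mathcal P_A(N)\le\sum_{\substack{r\le M\\ r\ \mathrm{prime}}}B(r).
\]
By Proposition~\ref{prop:primecode}, $B(r)=A(r)+1$ for every prime $r$. The power envelope (Lemma~\ref{lem:envelope}) gives $B(M)\ll\sigma_\alpha(M)\ll M$. This is $o(M^\ph/\log M)$, so the incomplete block is negligible. It therefore suffices to show
\[
 S(M):=\sum_{\substack{r\le M\\ r\ \mathrm{prime}}}(A(r)+1)\sim C\frac{M^\ph}{\ph\log M}.
\]

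For $S(M)$, the main input is Theorem~\ref{thm:main}: $A(r)=Cr^\alpha(1+o(1))$. Split the primes at a fixed cutoff. The small ones contribute $O(1)$, and the $+1$ terms contribute $\pi(M)=o(M^\ph/\log M)$. It remains to evaluate
\[
 \sum_{r\le M\text{ prime}}r^\alpha\sim\frac{M^{1+\alpha}}{(1+\alpha)\log M}=\frac{M^\ph}{\ph\log M}.
\]
This follows from the prime number theorem by partial summation, exactly as in the proof of Theorem~\ref{thm:maximal}: integrate $t^\alpha$ against $d\pi(t)$ with $\pi(t)\sim t/\log t$. The relative $o(1)$ error in $A(r)$ is handled by the usual $\varepsilon$-argument, since the sum has positive terms.

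Finally we convert from $M$ to $N$. By the inversion relation \eqref{eq:inverse} and Theorem~\ref{thm:main}, $N\sim F(M)\sim CKM^\ph$ with $K=\zeta(\ph)/\ph$. Hence $M^\ph\sim N/(CK)$ and $\log M\sim\alpha\log N=\log N/\ph$. Substituting,
\[
 \mathcal P_A(N)\sim\frac{C}{\ph}\cdot\frac{N}{CK}\cdot\frac{\ph}{\log N}=\frac{N}{K\log N}=\frac{\ph}{\zeta(\ph)}\frac{N}{\log N}.
\]
This is \eqref{eq:primepositions}.

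The only genuinely delicate step is the bookkeeping of the logarithm. The factor $\log M$ becomes $\alpha\log N$, and this is what produces the numerator $\ph$. One also needs the incomplete block and the $+1$ corrections to be of strictly smaller order than $N/\log N$. The first is $O(M)=O(N^\alpha)$ and the second is $O(M/\log M)$, so both are harmless. No uniform rate from Theorem~\ref{thm:rate} is needed, only the qualitative asymptotic.
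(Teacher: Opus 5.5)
Your proof is correct and follows essentially the same route as the paper. Both use exact block counting with $B(p)=A(p)+1$, the prime number theorem with partial summation, the clock asymptotic $F(M)\sim C\zeta(\ph)M^\ph/\ph$ (so that $\log M\sim\alpha\log N$), and the bound $B(A(N))=O(N^\alpha)=o(N/\log N)$ for the incomplete block. The only cosmetic difference is that you sandwich $\mathcal P_A(N)$ directly with $M=A(N)$, whereas the paper first evaluates at the endpoints $N=F(M)$.
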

\begin{proof}
Proposition~\ref{prop:primecode} and exact block counting give
\[
 \mathcal P_A(F(M))=\sum_{p\le M}B(p)
 =\sum_{p\le M}(A(p)+1).
\]
The prime number theorem \cite[\S27.12]{dlmf}, partial summation, and
$A(p)\sim Cp^\alpha$
yield
\[
 \mathcal P_A(F(M))\sim\frac{CM^\ph}{\ph\log M}.
\]
Combine this with $F(M)\sim C\zeta(\ph)M^\ph/\ph$ and
$\log F(M)\sim\ph\log M$. Away from endpoints, the possible change is at
most $B(A(N))=O(N^\alpha)=o(N/\log N)$, proving the result.
\end{proof}

\section{Arithmetic dependence between blocks}\label{sec:correlations}
The next result shows that consecutive normalized block lengths are not
asymptotically independent.

\begin{theorem}[Fixed-shift correlations]\label{thm:correlations}
For every fixed integer $h\ge1$,
\begin{equation}\label{eq:fixedshift}
 \lim_{M\to\infty}\frac1M\sum_{m\le M}R_mR_{m+h}
 =\mathcal C(h):=
 \sum_{\substack{d,e\ge1\\(d,e)\mid h}}
 \frac{d^{-\alpha}e^{-\alpha}}{\lcm(d,e)}.
\end{equation}
The series converges absolutely, and for every $h\ge1$ it simplifies to
\begin{equation}\label{eq:closedshift}
 \mathcal C(h)=\frac{\zeta(\ph)^2}{\zeta(2\ph)}
              \sigma_{-(2\alpha+1)}(h).
\end{equation}
In particular,
\begin{equation}\label{eq:adjacentcrossmoment}
 \mathcal C(1)=\frac{\zeta(\ph)^2}{\zeta(2\ph)}.
\end{equation}
Thus adjacent normalized block lengths have limiting covariance
\begin{equation}\label{eq:adjacentcovariance}
 \zeta(\ph)^2\left(\frac1{\zeta(2\ph)}-1\right)<0
\end{equation}
and limiting correlation coefficient
\begin{equation}\label{eq:adjacentcorrelation}
 \frac{1-\zeta(2\ph)}{\zeta(\sqrt5)-\zeta(2\ph)}
 =-0.5249104386\ldots.
\end{equation}
\end{theorem}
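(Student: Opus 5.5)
First treat $Z_m=\sigma_{-\alpha}(m)$ in place of $R_m$, and then transfer. Expanding both divisor sums gives
\[
 \frac1M\sum_{m\le M}Z_mZ_{m+h}
 =\sum_{d,e\ge1}d^{-\alpha}e^{-\alpha}\,
 \frac{\#\{m\le M:d\mid m,\ e\mid m+h\}}{M}.
\]
By the Chinese remainder theorem, the system $m\equiv0\pmod d$, $m\equiv -h\pmod e$ is soluble exactly when $(d,e)\mid h$, and then $m$ lies in one class modulo $\lcm(d,e)$. The count is therefore $M/\lcm(d,e)+O(1)$, and $0$ if $d$ or $e$ exceeds $M+h$.

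For dominated convergence I would use the bound $\lfloor M/\lcm\rfloor/M+1/M$ only where it is small. For a convergent majorant, note that a solution $m\le M$ forces $\lcm(d,e)\le M+h$ for $M$ large, so the normalized count is at most $2/\lcm(d,e)$ whenever it is nonzero. The majorant is then $2d^{-\alpha}e^{-\alpha}/\lcm(d,e)$, which is the $r=2$ series \eqref{eq:lcmseries} and converges to $\E Z^2$. Termwise the normalized count tends to $\ind_{(d,e)\mid h}/\lcm(d,e)$, so dominated convergence gives $\mathcal C(h)$ together with absolute convergence.

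For the transfer, write $R_m=Z_m(1+\varepsilon_m)$ with $\varepsilon_m\to0$ (Proposition~\ref{prop:uniform}). Then
\[
 |R_mR_{m+h}-Z_mZ_{m+h}|\le c\,\eta_m Z_mZ_{m+h},\qquad \eta_m=\max(|\varepsilon_m|,|\varepsilon_{m+h}|).
\]
The averages of $Z_mZ_{m+h}$ are bounded, by the previous step or by Cauchy--Schwarz with the second moment. So the argument with a cutoff $D$, exactly as in the moment transfer in Theorem~\ref{thm:distribution}, gives \eqref{eq:fixedshift} for $R_m$.

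For the closed form, write $d=ga$, $e=gb$ with $(a,b)=1$ and $g\mid h$. Then $\lcm=gab$, and the sum becomes
\[
 \sum_{g\mid h}g^{-2\alpha-1}\sum_{(a,b)=1}(ab)^{-\alpha-1}.
\]
The inner sum is $\zeta(\ph)^2/\zeta(2\ph)$, the same computation already done for $r=2$ but without the sum over $g$. This proves \eqref{eq:closedshift}, and $h=1$ gives \eqref{eq:adjacentcrossmoment}.

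The covariance is $\mathcal C(1)-(\E Z)^2$, using that the means of $R_m$ and $R_{m+1}$ both tend to $\zeta(\ph)$ by \eqref{eq:allmoments} with $r=1$. It is negative because $\zeta(2\ph)>1$. For the correlation, both variances tend to $\Var Z$ from \eqref{eq:variance}, since shifting by $h$ changes the average of $R_m^2$ by $o(1)$; this holds because $R_{M+j}^2\ll M^{o(1)}$ by Theorem~\ref{thm:maximal}. Dividing the covariance by $\Var Z$ and multiplying numerator and denominator by $\zeta(2\ph)$ gives \eqref{eq:adjacentcorrelation}. The numerical value is a direct evaluation.

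The main obstacle is the justification of dominated convergence with an $M$-independent majorant. The $O(1)$ error terms, summed over all pairs, do not converge. This is why I would use the observation that a nonzero count forces $\lcm(d,e)\le M+h$, which gives the bound $2/\lcm(d,e)$. If that proves awkward, the fallback is to truncate at $d,e\le D$ and control the tails by Cauchy--Schwarz against the tail estimate \eqref{eq:tailZ} applied to second moments.
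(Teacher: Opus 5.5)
Your overall structure matches the paper: CRT for the pair counts, the transfer via $R_m=Z_m(1+\varepsilon_m)$, the decomposition $d=ga$, $e=gb$, and the covariance and correlation bookkeeping. The gap is in the step you yourself flag as the main obstacle. A solution $m\le M$ of $d\mid m$, $e\mid m+h$ does \emph{not} force $\lcm(d,e)\le M+h$. It only forces $d\le M$ and $e\le M+h$, and the lcm can then be of order $M^2$.

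For instance, take $h=1$, $d=M$, $e=M+1$. Then $m=M$ is a solution and $(d,e)=1$, but $\lcm(d,e)=M(M+1)$. The normalized count is $1/M$, far larger than $2/\lcm(d,e)$. So $2d^{-\alpha}e^{-\alpha}/\lcm(d,e)$ is not a majorant, and the contribution of all pairs with $\lcm(d,e)>M$,
\[
 \frac1M\sum_{m\le M}\ \sum_{\substack{d\mid m,\ e\mid m+h\\ \lcm(d,e)>M}}(de)^{-\alpha},
\]
is left uncontrolled. This is exactly where the shifted case differs from the unshifted moments in Theorem~\ref{thm:distribution}. There the count is exactly $\lfloor M/\lcm(d_1,\ldots,d_r)\rfloor$, with no extra $+1$.

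You can close the gap in either of two ways.
\begin{itemize}
\item Your fallback is the paper's proof. Truncate to $Z_{m,D}$ and obtain the periodic limit by CRT. Control the tail by the Ces\`aro $L^2$ bound $M^{-1}\sum_{m\le M}(Z_m-Z_{m,D})^2\le S_D:=\sum_{d,e>D}(de)^{-\alpha}/\lcm(d,e)$. That count is unshifted, hence exact. The shifted tail is bounded by $(M+h)S_D/M$, and Cauchy--Schwarz finishes. What is needed here is this second-moment tail bound, not the first-moment estimate \eqref{eq:tailZ} itself.
\item Alternatively, keep dominated convergence but use the correct consequence of a nonzero count: $\max(d,e)\le M+h\le 2M$ for $M\ge h$. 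This gives
\[
 \frac{\#\{m\le M: d\mid m,\ e\mid m+h\}}{M}\le\frac1{\lcm(d,e)}+\frac2{\max(d,e)}.
\]
The resulting majorant is summable, because
\[
 \sum_{d,e}\frac{(de)^{-\alpha}}{\max(d,e)}\ll\sum_e e^{-1-\alpha}e^{1-\alpha}=\sum_e e^{-2\alpha}<\infty,
\]
using $2\alpha>1$. This fix is shorter but depends on the specific inequality $2\alpha>1$. The truncation argument needs only convergence of the $r=2$ series \eqref{eq:lcmseries}.
\end{itemize}
The remaining steps of your proposal are fine as written.
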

\begin{proof}
Put
$Z_{m,D}=\sum_{d\le D,\,d\mid m}d^{-\alpha}$. For fixed $D$, the Chinese
remainder theorem gives
\[
 \lim_{M\to\infty}\frac1M\sum_{m\le M}Z_{m,D}Z_{m+h,D}
 =\sum_{\substack{d,e\le D\\(d,e)\mid h}}
 \frac{d^{-\alpha}e^{-\alpha}}{\lcm(d,e)}.
\]
Indeed, the congruences $m\equiv0\pmod d$ and
$m\equiv-h\pmod e$ are soluble exactly when $(d,e)\mid h$, and then
form one residue class modulo $\lcm(d,e)$. The series in
\eqref{eq:fixedshift} is a subseries of the positive, convergent
second-moment series \eqref{eq:lcmseries}. The same series shows that
$Z_m-Z_{m,D}$ tends to zero in Ces\`aro $L^2$ in the precise sense
\[
 \lim_{D\to\infty}\limsup_{M\to\infty}\frac1M
 \sum_{m\le M}(Z_m-Z_{m,D})^2=0.
\]
More explicitly, put
\[
 S_D=\sum_{d,e>D}\frac{d^{-\alpha}e^{-\alpha}}{\lcm(d,e)}
 \longrightarrow0.
\]
The unshifted mean square tail is at most $S_D$ for every $M$, and
the shifted one is at most $(M+h)S_D/M$, by enlarging the summation
interval to $1\le m\le M+h$. Cauchy--Schwarz therefore permits passage
to the untruncated $Z_m$.
For the final transfer write $R_m=Z_m(1+\varepsilon_m)$, with
$\varepsilon_m\to0$. Then
\[
 |R_mR_{m+h}-Z_mZ_{m+h}|
 \le\bigl(|\varepsilon_m|+|\varepsilon_{m+h}|
       +|\varepsilon_m\varepsilon_{m+h}|\bigr)Z_mZ_{m+h}.
\]
If $K_2=\sup_{L\ge1}L^{-1}\sum_{m\le L}Z_m^2<\infty$, Cauchy--Schwarz
gives
\[
 \frac1M\sum_{m\le M}Z_mZ_{m+h}
 \le K_2\sqrt{1+h/M}.
\]
Put $e_D=\sup_{m>D}|\varepsilon_m|$. Splitting off the first $D$ terms,
the averaged product difference is at most
$O_{D,h}(M^{-1})+(2e_D+e_D^2)K_2\sqrt{1+h/M}$.
Taking $M\to\infty$ and then $D\to\infty$ proves the transfer to $R_m$.

For arbitrary $h$, write $d=ga$, $e=gb$, with $(a,b)=1$.
The compatibility condition becomes $g\mid h$, and
$\lcm(d,e)=gab$. Thus
\[
 \mathcal C(h)=
 \sum_{g\mid h}g^{-2\alpha-1}\sum_{(a,b)=1}(ab)^{-\ph}.
\]
The inner sum has Euler product
\[
 \sum_{(d,e)=1}(de)^{-\ph}
 =\prod_p\frac{1+p^{-\ph}}{1-p^{-\ph}}
 =\frac{\zeta(\ph)^2}{\zeta(2\ph)}.
\]
This proves \eqref{eq:closedshift} and its specialization at $h=1$.
For fixed $h$, the shifted first and second Ces\`aro moments have the
same limits as the unshifted ones: the finitely many boundary terms are
$M^{o(1)}$ by Theorem~\ref{thm:maximal}, and their contribution after
division by $M$ vanishes.
Subtracting the square of the mean in \eqref{eq:mean}, then dividing by
the variance in \eqref{eq:variance}, proves
\eqref{eq:adjacentcovariance}--\eqref{eq:adjacentcorrelation}.
\end{proof}

\section{Conclusion and further directions}\label{sec:conclusion}

Divisor weighting separates two aspects of self-description.
At the global level, inversion contracts the asymptotic envelopes and
forces a single power law. The exponent is stable throughout the
finite-Dirichlet-mass class; the kernel changes the coefficient.
At the local level, divisor structure persists. The resulting
multiplicities have unbounded normalized extremes, a singular
continuous limiting law with full support, and explicit arithmetic
dependence at fixed shifts.

The uniform divisor comparison is the bridge between these levels.
It permits classical arithmetic statistics to be transferred to a
sequence that is not itself multiplicative. The self-generated clock
then adds a second transformation: a position sees the size-biased
law, not the law seen by a uniformly chosen block label. The exact
residue densities show this distinction concretely; even values, for
example, occupy more than half of the sequence positions.

Several questions now have a precise starting point.

\paragraph{The endpoint error and an oscillatory profile.}
Can the estimate of Theorem~\ref{thm:rate} be sharpened to
\begin{equation}\label{eq:openrate}
 A(n)-Cn^\alpha=O\left(\frac{n^\alpha}{\log n}\right)?
\end{equation}
For the classical Golomb sequence this scale is established
\cite{vardi}, and P\'etermann--R\'emy \cite{petermannremy} obtain a
continuous antiperiodic profile on the $\log\log n$ scale.
A natural divisor-weighted analogue would ask for a nonzero continuous
function $P$, with $P(t+1)=-P(t)$, such that
\[
 A(n)=Cn^\alpha\left[
 1+\frac{1}{\log n}P\left(\frac{\log\log n}{\log\ph}\right)
 +o\left(\frac1{\log n}\right)\right].
\]
Neither this expansion nor \eqref{eq:openrate} is proved here.
The formal model in Appendix~\ref{app:formal} explains why this
particular shape is plausible; the sign changes in
Table~\ref{tab:growth} are illustrations, not a proof of persistent
oscillation.

\paragraph{The boundary of the kernel theorem.}
What replaces the power law when $W(\ph)=\infty$?
The present proof identifies the exact step that fails: the clock's
comparison sum no longer has a finite $m^\ph$ coefficient.
Kernels at or near this boundary suggest logarithmic corrections,
whereas heavier kernels may alter the exponent. A useful next goal is
a classification in terms of the growth of
$\sum_{q\le x}w(q)q^{-\ph}$, with causality kept separate from
asymptotic analysis.

\paragraph{Quantitative arithmetic statistics.}
The limiting law has full support but no density. What bounds can be
obtained for its concentration function
\[
 \sup_{x\in\mathbb R}\Prob(x<Z\le x+\varepsilon)
 \qquad(\varepsilon\downarrow0),
\]
and how effectively do the empirical block distributions approach it?
The uniform estimate \eqref{eq:uniform} gives one source of error;
approximating divisibility by finitely many primes gives another.
Combining them quantitatively, and extending fixed-shift correlations
to joint laws of several consecutive blocks, would refine the present
description of local fluctuations.

\paragraph{Beyond the real boundary of the Dirichlet series.}
Appendix~\ref{app:series} establishes an abscissa of convergence and a
real-axis boundary asymptotic for $D_A(s)$. Whether this series admits
continuation across any part of $\Rea s=\ph$ is not settled by those
results. A sharper error expansion may provide the first indication
of what singularities should be expected.

These directions concern the second-order behavior and the interaction
between arithmetic and inversion. The first-order law, its kernel
dependence, and the two sampling limits do not rely on resolving them.

\appendix
\section{Additional structure of the self-description}\label{app:structure}
\subsection{A causal nested recurrence and arithmetic inversion}
\begin{proposition}[A nested recurrence]\label{prop:nested}
The same sequence is generated by $A(1)=1$ and
\begin{equation}\label{eq:nested}
 A(n+1)=A(n)+
 \ind_{\left\{\displaystyle n=
  \sum_{d=1}^{A(n)}A(d)\left\lfloor\frac{A(n)}d\right\rfloor\right\}}
 \qquad(n\ge1).
\end{equation}
Every sequence argument on the right is at most $n$.
\end{proposition}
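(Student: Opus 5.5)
The plan is to compare \eqref{eq:nested} with the block description \eqref{eq:inverse} and the endpoint formula \eqref{eq:floor}. Let $A$ be the sequence from Proposition~\ref{prop:construction}. By \eqref{eq:floor},
\[
 \sum_{d=1}^{A(n)}A(d)\left\lfloor\frac{A(n)}d\right\rfloor=F(A(n)),
\]
so the indicator in \eqref{eq:nested} is $\ind_{\{n=F(A(n))\}}$.

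The first step is to show that $A$ satisfies \eqref{eq:nested}. Put $m=A(n)$. By \eqref{eq:inverse}, $F(m-1)<n\le F(m)$. There are two cases.

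\emph{Case $n<F(m)$.} Then $n+1\le F(m)$, so position $n+1$ still lies in block $m$. Hence $A(n+1)=m$ and the indicator is $0$.

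\emph{Case $n=F(m)$.} Then $A(n+1)=A(F(m)+1)=m+1$, which is the identity recorded after \eqref{eq:floor}. This uses only that block $m+1$ is nonempty, since $B(m+1)\ge1$. The indicator is $1$.

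So in both cases $A(n+1)=A(n)+\ind_{\{n=F(A(n))\}}$, which is \eqref{eq:nested}.

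The second step is uniqueness. Given $A(1)=1$, the recurrence \eqref{eq:nested} determines $A(n+1)$ from $A(1),\dots,A(n)$, provided every argument on the right is at most $n$. The arguments that appear are $n$, $A(n)$, and $d\le A(n)$. So it suffices to show $A(n)\le n$ for the sequence produced by the recurrence.

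This follows by induction, since increments are $0$ or $1$: $A(n+1)\le A(n)+1\le n+1$. Hence the recurrence defines a unique sequence. Since the sequence of Proposition~\ref{prop:construction} satisfies it, the two coincide.

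For the statement as phrased, I will also record the sharper bound \eqref{eq:strict}, $A(n)<n$ for $n\ge3$. This holds for the true sequence. With $n=1,2$ checked directly ($A(1)=1$ and $A(2)=2$, both $\le n$), it confirms that every argument is at most $n$.

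The main obstacle is circularity. The induction must be run on the recurrence's own output, not on the already-known sequence. Concretely, I would prove by simultaneous induction on $n$ that the recurrence's value agrees with $A(n)$ from Proposition~\ref{prop:construction}. Suppose they agree up to $n$. Then the right side of \eqref{eq:nested} uses only indices $\le n$, where the two sequences agree, so it equals $A(n)+\ind_{\{n=F(A(n))\}}$. By the first step, this is $A(n+1)$.
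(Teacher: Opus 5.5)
Your proof is correct and follows essentially the same route as the paper: you identify the indicator with $\ind_{\{n=F(A(n))\}}$ via \eqref{eq:floor}, note that the value increases exactly at block endpoints, and use increments in $\{0,1\}$ to get $A(n)\le n$, which makes the recursion causal and forces uniqueness. Your explicit case split, and running the induction on the recurrence's own output rather than on the known sequence, spell out carefully what the paper compresses into a few lines.
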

\begin{proof}
At position $n$ the value increases precisely when the current block
ends, that is, when $n=F(A(n))$. Apply \eqref{eq:floor}. Since the
sequence starts at $1$ and has increments in $\{0,1\}$, $A(n)\le n$;
all terms needed on the right are already available. Conversely,
the block construction satisfies this explicit recurrence, whose
successive terms are unique.
\end{proof}

There is also exact arithmetic recovery of the sequence from its
multiplicities:
\begin{equation}\label{eq:mobius}
 A(m)=\sum_{d\mid m}\mu(d)B(m/d).
\end{equation}
This is ordinary M\"obius inversion \cite{dlmf}. Neither $A$ nor $B$
is multiplicative: $A(6)=3\ne A(2)A(3)=4$ and
$B(6)=8\ne B(2)B(3)=9$.

\subsection{Fixed iterates and the time to reach the fixed point}
Write $A^{\circ0}(n)=n$ and
$A^{\circ(k+1)}(n)=A(A^{\circ k}(n))$.

\begin{proposition}[Iterates and hitting time]\label{prop:iterates}
For every fixed integer $k\ge1$ and every $0<\gamma<1$,
\begin{equation}\label{eq:iterateasymptotic}
 A^{\circ k}(n)=
 C^{(1-\alpha^k)/(1-\alpha)}n^{\alpha^k}
 \left(1+O_{k,\gamma}((\log n)^{-\gamma})\right).
\end{equation}
The only fixed points of $A$ are $1$ and $2$. For $n\ge2$, let
\begin{equation}\label{eq:hittingdef}
 h(n)=\min\{k\ge0:A^{\circ k}(n)=2\}.
\end{equation}
Then
\begin{equation}\label{eq:hittingtime}
 h(n)=\frac{\log\log n}{\log\ph}+O(1).
\end{equation}
\end{proposition}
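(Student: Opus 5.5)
We prove the three assertions in turn: iterate Theorem~\ref{thm:rate} for \eqref{eq:iterateasymptotic}, read off the fixed points from the initial blocks and the strict bound \eqref{eq:strict}, and obtain \eqref{eq:hittingtime} by iterating two-sided power bounds at the level of $\log\log$.

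\emph{Iterates.} We use induction on $k$. The case $k=1$ is Theorem~\ref{thm:rate}. Suppose $m=A^{\circ k}(n)=c_kn^{\alpha^k}(1+O((\log n)^{-\gamma}))$ with $c_k=C^{(1-\alpha^k)/(1-\alpha)}$. Then $m\to\infty$ and $\log m\asymp_k\log n$, so Theorem~\ref{thm:rate} at position $m$ gives
\[
A(m)=Cm^\alpha\bigl(1+O((\log n)^{-\gamma})\bigr).
\]
Substituting the inductive hypothesis yields $A(m)=Cc_k^\alpha n^{\alpha^{k+1}}(1+O(\cdot))$, because $(1+O(\epsilon))^\alpha=1+O(\epsilon)$. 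The constants satisfy $Cc_k^\alpha=C^{1+\alpha(1-\alpha^k)/(1-\alpha)}=C^{(1-\alpha^{k+1})/(1-\alpha)}$, which closes the induction.

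\emph{Fixed points.} The initial blocks give $A(1)=1$ and $A(2)=2$. For $n\ge3$ the bound \eqref{eq:strict} gives $A(n)<n$, so these are the only fixed points. Since $A(n)\ge2$ for $n\ge2$, the strictly decreasing iterates starting from $n\ge2$ reach $2$ and stay there. Hence $h(n)$ is well defined, and $h(n)\le n$.

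\emph{Hitting time.} This is the main step. Lemma~\ref{lem:envelope} gives $M^{-1}x^\alpha\le A(x)\le Mx^\alpha$ for all $x$, so with $\lambda(x)=\log x$ we have
\[
\bigl|\lambda(A(x))-\alpha\lambda(x)\bigr|\le\log M=:c.
\]
Iterating, $\lambda(A^{\circ k}(n))=\alpha^k\log n+e_k$ with $|e_k|\le c(1+\alpha+\alpha^2+\cdots)\le c/(1-\alpha)=c\ph^2$. This holds uniformly in $k$ for as long as the iterates stay at least $2$, which they always do; the error does not accumulate, since the per-step error is damped geometrically.

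Let $k_0$ be the first index with $\alpha^{k_0}\log n\le\Lambda$, where the constant $\Lambda$ is chosen so that $\Lambda$ exceeds $c\ph^2$ plus a margin. Then $\log A^{\circ k_0}(n)\le\Lambda+c\ph^2$, so $A^{\circ k_0}(n)$ is bounded by a constant $N_0$. Because the iterates decrease strictly until they reach $2$, at most $N_0$ further steps are needed, giving $h(n)\le k_0+N_0$.

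For the lower bound, take $k_1=k_0-1-j$ with a fixed $j$ chosen so that $\alpha^{k_1}\log n\ge\Lambda\ph^j>c\ph^2+\log 3$. Then $A^{\circ k_1}(n)\ge3$, so $h(n)>k_1$.

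Finally, $k_0=\log\log n/\log\ph+O(1)$, which gives \eqref{eq:hittingtime}. The only point needing care is that the envelope constant $c$ is uniform over all positions including small ones, and Lemma~\ref{lem:envelope} provides exactly this since it holds for every $n\ge1$.
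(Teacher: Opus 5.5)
Your proposal is correct and follows the paper's proof essentially step for step. The shared steps are: induction on $k$ from Theorem~\ref{thm:rate} with the exponent recursion for $C$; the fixed points read off from \eqref{eq:strict}; and the geometrically damped envelope $|\log A^{\circ j}(n)-\alpha^j\log n|\le\log M/(1-\alpha)$, followed by boundedly many strict-descent steps. The only cosmetic difference is the lower bound. You exhibit an index $k_1$ with $A^{\circ k_1}(n)\ge3$, whereas the paper substitutes $j=h(n)$, where the iterate equals $2$, directly into the envelope to get $\alpha^{h(n)}\log n=O(1)$.
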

\begin{proof}
Formula \eqref{eq:iterateasymptotic} follows by induction from
Theorem~\ref{thm:rate}. For fixed $j$,
$\log A^{\circ j}(n)\asymp\log n$, so the relative error remains
$O_{j,\gamma}((\log n)^{-\gamma})$, while the exponent of $C$ obeys
$s_{j+1}=1+\alpha s_j$ and hence
$s_j=1+\alpha+\cdots+\alpha^{j-1}$.

For the hitting time, the power envelope gives a constant $M\ge2$ such
that
\[
 M^{-1}x^\alpha\le A(x)\le Mx^\alpha\qquad(x\ge1).
\]
If $x_j=A^{\circ j}(n)$, iteration of the logarithmic inequalities gives
\begin{equation}\label{eq:iteratelogenvelope}
 \left|\log x_j-\alpha^j\log n\right|
 \le\frac{\log M}{1-\alpha}.
\end{equation}
Since $A(x)<x$ for $x\ge3$ and $A(x)\ge2$ for $x\ge2$, every orbit
starting at $n\ge2$ reaches $2$, and no integer greater than $2$ is fixed.
Putting $j=h(n)$ in \eqref{eq:iteratelogenvelope} shows that
$\alpha^j\log n=O(1)$, which gives the lower bound in
\eqref{eq:hittingtime}. Conversely, after
$\log\log n/\log\ph+O(1)$ iterations the upper half of
\eqref{eq:iteratelogenvelope} places $x_j$ in a fixed finite set. Strict
descent then requires only a bounded number of further steps. This proves
the matching upper bound.
\end{proof}

\subsection{A zeta law inside the endpoint clock}
The divisor decomposition of a block carries a natural latent label. For
each $m$, partition its $B(m)$ copies into groups indexed by $d\mid m$, the
$d$th group having $A(d)$ copies, and mark that group by the complementary
divisor $q=m/d$. Choose one marked copy uniformly from all blocks through
$M$, and denote its mark by $Q_M$.

\begin{theorem}[Divisor-layer zeta law]\label{thm:zetalayer}
The variables $Q_M$ converge in total variation to the zeta-distributed
integer $Q$ given by
\begin{equation}\label{eq:zetalayerlaw}
 \Prob(Q=q)=\frac{q^{-\ph}}{\zeta(\ph)}\qquad(q\ge1).
\end{equation}
Its tail and moments satisfy
\begin{align}
 \Prob(Q>y)&\sim\frac{y^{-\alpha}}{\alpha\zeta(\ph)},
       \label{eq:zetalayertail}\\
 \E Q^t&=\frac{\zeta(\ph-t)}{\zeta(\ph)}\qquad(t<\alpha).
       \label{eq:zetalayermoments}
\end{align}
The moments of order $t\ge\alpha$, including the mean, are infinite.
For every $t<\alpha$, the corresponding moments of $Q_M$ converge to
\eqref{eq:zetalayermoments}.
\end{theorem}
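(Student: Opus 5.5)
We compute the law of $Q_M$ exactly and then pass to the limit. The total number of marked copies in blocks $1,\dots,M$ is $F(M)$. The number of copies carrying mark $q$ is
\[
 N_M(q)=\sum_{d\le M/q}A(d),
\]
because each pair $(d,q)$ with $dq\le M$ contributes a group of $A(d)$ copies. Hence
\[
 \Prob(Q_M=q)=\frac{S(M/q)}{F(M)},\qquad S(y)=\sum_{d\le y}A(d).
\]
Theorem~\ref{thm:main} gives $A(d)\sim Cd^\alpha$, and Ces\`aro summation then gives $S(y)\sim Cy^\ph/\ph$. Combined with $F(M)\sim CKM^\ph=C\zeta(\ph)M^\ph/\ph$, this yields, for each fixed $q$,
\[
 \Prob(Q_M=q)\to\frac{q^{-\ph}}{\zeta(\ph)}.
\]
These limits sum to $1$, so Scheff\'e's lemma for probability mass functions on $\N$ upgrades pointwise convergence to total variation convergence.

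The tail \eqref{eq:zetalayertail} follows by comparing the sum with an integral:
\[
 \sum_{q>y}q^{-\ph}\sim\int_y^\infty u^{-\ph}\,du=\frac{y^{-\alpha}}{\alpha}.
\]
The moment formula \eqref{eq:zetalayermoments} is immediate, since $\sum_q q^{t-\ph}=\zeta(\ph-t)$ converges exactly when $\ph-t>1$, that is, when $t<\alpha$. For $t\ge\alpha$ the series diverges. In particular the mean is infinite, because $\alpha<1$.

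The main obstacle is the convergence of $\E Q_M^t$ for $t<\alpha$. For negative $t$ the function $q^t$ is bounded, so total variation convergence suffices. For $0\le t<\alpha$ we need uniform integrability, and we obtain it from a uniform domination. The power envelope of Lemma~\ref{lem:envelope} gives $S(y)\ll y^\ph$ for $y\ge1$, and $S(y)=0$ for $y<1$. Together with $F(M)\gg M^\ph$ this gives, uniformly in $M$,
\[
 \Prob(Q_M=q)\ll q^{-\ph}\,\ind_{\{q\le M\}}.
\]
Then $q^t\Prob(Q_M=q)\ll q^{t-\ph}$, which is summable for $t<\alpha$. Dominated convergence over $q$ therefore gives
\[
 \E Q_M^t\to\frac{\zeta(\ph-t)}{\zeta(\ph)}.
\]
The only care needed is to keep the implied constants independent of $M$, which holds because both envelope bounds are uniform.
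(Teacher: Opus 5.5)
Your proposal is correct and follows the paper's proof essentially step for step. Both use the exact formula $\Prob(Q_M=q)=\sum_{d\le M/q}A(d)/F(M)$, pointwise convergence from the first-order asymptotics of the partial sums and of $F$, Scheff\'e's lemma for total variation, and the power-envelope bound $\Prob(Q_M=q)\ll q^{-\ph}$ (uniform in $M$) for dominated convergence of the moments when $t<\alpha$, with the tail and moment formulas for $Q$ obtained by summing the power-law mass.
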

\begin{proof}
Exact counting and \eqref{eq:floor} give
\begin{equation}\label{eq:zetalayerfinite}
 \Prob(Q_M=q)=\frac{\sum_{d\le M/q}A(d)}{F(M)}.
\end{equation}
Since
\[
 \sum_{d\le x}A(d)\sim\frac C\ph x^\ph,\qquad
 F(M)\sim\frac{C\zeta(\ph)}\ph M^\ph,
\]
the right side tends to $q^{-\ph}/\zeta(\ph)$ for every fixed $q$.
Both sides of the limiting assertion are probability mass functions, so
pointwise convergence implies total-variation convergence. The power
envelope also gives the uniform bound
$\Prob(Q_M=q)\ll q^{-\ph}$. Dominated convergence therefore proves moment
convergence for $t<\ph-1=\alpha$. Finally, summing the power-law mass in
\eqref{eq:zetalayerlaw} proves \eqref{eq:zetalayertail} and the remaining
moment assertions.
\end{proof}

Thus the factor $\zeta(\ph)$ in the global clock has a direct
probabilistic interpretation: a typical divisor contribution has a
complementary divisor with mass exponent $\ph$ and tail exponent $\alpha$.

\subsection{A formal model for second-order oscillations}\label{app:formal}
This subsection records a heuristic only; none of the proved results
depends on it. The antiperiodic shape is suggested by the established
classical Golomb theory \cite{petermannremy}. To see how divisor
weighting could enter the corresponding mechanism, introduce a smooth \emph{formal}
perturbation
\[
 A(e^t)\approx Ce^{\alpha t}(1+u(t)).
\]
The continuum approximation to the weighted clock averages
$u(t-Y_0)$, where
\begin{equation}\label{eq:Yformal}
 Y_0=\log Q+T,\qquad
 \Prob(Q=q)=\frac{q^{-\ph}}{\zeta(\ph)},\qquad
 T\ \text{has density }\ph e^{-\ph v}\ind_{v\ge0},
\end{equation}
with $Q$ and $T$ independent. This follows by replacing
$\sum_{d\le e^t/q}d^\alpha u(\log d)$ by its integral and putting
$d=e^t e^{-v}/q$. Linearizing the inverse then suggests
\begin{equation}\label{eq:formalfeedback}
 u(t)\approx-\alpha\,\E\bigl[u(\alpha t+\log C-Y_0)\bigr].
\end{equation}
The normalization $C(CK)^\alpha=1$ is responsible for the shift
$\log C$. If the order-one shifts are ignored, the reduced
relation $u(t)\approx-\alpha u(\alpha t)$ admits profiles
\begin{equation}\label{eq:formalprofile}
 u(t)=t^{-1}P\left(\frac{\log t}{\log\ph}\right),
 \qquad P(x+1)=-P(x).
\end{equation}
Thus an alternating oscillation on a logarithm-of-logarithm scale
is compatible with a relative error of order $1/\log n$.
Equations \eqref{eq:formalfeedback}--\eqref{eq:formalprofile} are a
heuristic linearization, not an asymptotic theorem. Controlling the
discrete remainder and determining whether a nonzero profile survives
are genuine additional tasks.

\section{Generating series and their real boundary}\label{app:series}
For $|z|<1$ define
\[
 \mathcal A(z)=\sum_{n\ge1}A(n)z^n,
 \qquad \mathcal B(z)=\sum_{m\ge1}B(m)z^m.
\]
Polynomial bounds justify rearrangement, giving the exact Lambert
identity
\begin{equation}\label{eq:lambert}
 \mathcal B(z)=\sum_{d\ge1}\frac{A(d)z^d}{1-z^d}.
\end{equation}
The first differences of $A$, extended by $A(0)=0$, equal $1$
exactly at the block starts. Therefore
\begin{equation}\label{eq:jumpgf}
 (1-z)\mathcal A(z)=\sum_{m\ge1}z^{F(m-1)+1}.
\end{equation}
Together, these formulas link an arithmetic divisor transform to a
sparse sequence of self-generated jump positions.

For $\Rea s>\ph$ put
\[
 D_A(s)=\sum_{n\ge1}\frac{A(n)}{n^s},\qquad
 D_B(s)=\sum_{m\ge1}\frac{B(m)}{m^s}.
\]
Absolute convergence and \eqref{eq:frequency} imply
\begin{equation}\label{eq:Dirichlet}
 D_B(s)=\zeta(s)D_A(s)\qquad(\Rea s>\ph).
\end{equation}
The abscissa of absolute convergence of both series is $\ph$:
$A(n)\sim Cn^\alpha$ gives this for $D_A$, and
$B(n)\ge A(n)$ together with \eqref{eq:Dirichlet} gives it for
$D_B$.

\begin{proposition}[Real boundary asymptotics]\label{prop:abelian}
For every $0<\gamma<1$, as $\varepsilon\downarrow0$,
\begin{align}
 \varepsilon D_A(\ph+\varepsilon)
 &=C+O_\gamma(\varepsilon^\gamma),\label{eq:abelA}\\
 \varepsilon D_B(\ph+\varepsilon)
 &=C\zeta(\ph)+O_\gamma(\varepsilon^\gamma).
       \label{eq:abelB}
\end{align}
\end{proposition}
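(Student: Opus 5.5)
The plan is to derive \eqref{eq:abelA} directly from Theorem~\ref{thm:rate} by an Abelian comparison with the model series $\sum_n Cn^{\alpha-s}$, and then to obtain \eqref{eq:abelB} from the factorization \eqref{eq:Dirichlet}.

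Write $A(n)=Cn^\alpha(1+u(n))$, where $|u(n)|\ll_\gamma(\log(n+1))^{-\gamma}$ by Theorem~\ref{thm:rate}; $u$ is bounded for small $n$. With $s=\ph+\varepsilon$, so that $\alpha-s=-1-\varepsilon$, this gives
\[
 D_A(\ph+\varepsilon)=C\zeta(1+\varepsilon)+C\sum_{n\ge1}u(n)n^{-1-\varepsilon}.
\]
The standard Laurent expansion $\zeta(1+\varepsilon)=\varepsilon^{-1}+O(1)$ shows that the first term contributes $C+O(\varepsilon)$ after multiplication by $\varepsilon$. The whole problem therefore reduces to the bound
\[
 \varepsilon\sum_{n\ge2}(\log n)^{-\gamma}n^{-1-\varepsilon}\ll_\gamma\varepsilon^{\gamma}.
\]

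To prove this bound, I will compare the sum with the integral $\int_2^\infty(\log x)^{-\gamma}x^{-1-\varepsilon}\,dx$. The summand is decreasing, so the comparison costs only $O(1)$. Substituting $x=e^{v}$ turns the integral into $\int_{\log2}^\infty v^{-\gamma}e^{-\varepsilon v}\,dv\le\varepsilon^{\gamma-1}\Gamma(1-\gamma)$. This Gamma integral converges precisely because $\gamma<1$. Multiplying by $\varepsilon$ gives $O_\gamma(\varepsilon^\gamma)$, and the extra term of size $O(\varepsilon)$ is absorbed because $\gamma<1$. This proves \eqref{eq:abelA}.

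For \eqref{eq:abelB}, I will multiply \eqref{eq:abelA} by $\zeta(\ph+\varepsilon)=\zeta(\ph)+O(\varepsilon)$, which holds since $\zeta$ is smooth on real $s>1$. This yields $C\zeta(\ph)+O_\gamma(\varepsilon^\gamma)$, because $\varepsilon D_A(\ph+\varepsilon)$ is bounded.

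The only place where care is needed is the Gamma-integral estimate, together with checking that the rate in Theorem~\ref{thm:rate} is uniform in $n$, which the paper asserts. At the endpoint $\gamma=1$ this argument would lose a factor $\log(1/\varepsilon)$, which explains why the statement is restricted to $\gamma<1$.
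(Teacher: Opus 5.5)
Your proposal is correct and follows the paper's proof essentially step for step. You split off $C\zeta(1+\varepsilon)$ using Theorem~\ref{thm:rate}, bound the remainder sum by $O_\gamma(\varepsilon^{\gamma-1})$ via the substitution $x=e^v$ (your explicit bound $\varepsilon^{\gamma-1}\Gamma(1-\gamma)$ spells out the paper's one-line integral estimate), and then obtain \eqref{eq:abelB} by multiplying by $\zeta(\ph+\varepsilon)=\zeta(\ph)+O(\varepsilon)$ through \eqref{eq:Dirichlet}.
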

\begin{proof}
Theorem~\ref{thm:rate} gives
\[
 D_A(\ph+\varepsilon)
 =C\zeta(1+\varepsilon)
 +O_\gamma\left(1+\sum_{n\ge3}
 \frac{1}{n^{1+\varepsilon}(\log n)^\gamma}\right).
\]
The sum is $O_\gamma(\varepsilon^{\gamma-1})$, by the integral
substitution $t=\log x$. Since
$\varepsilon\zeta(1+\varepsilon)=1+O(\varepsilon)$, this proves
\eqref{eq:abelA}. Multiplication by
$\zeta(\ph+\varepsilon)=\zeta(\ph)+O(\varepsilon)$ proves
\eqref{eq:abelB}.
\end{proof}

The real-axis limits above do not imply meromorphic continuation or a
simple pole in a complex neighborhood of $\ph$. The identity
\eqref{eq:Dirichlet} is established in the common half-plane of
absolute convergence; $D_A$ has no Euler product supplied by this
identity, since its coefficients are not multiplicative.

\section{Computation and reproducibility}\label{app:verification}
The computation uses block endpoints, not a materialized sequence of
hundreds of billions of terms. A self-feeding divisor sieve generates
$A(m)$, $B(m)$, and $F(m)$ for $1\le m\le M$. Once $A(m)$ is known,
its contribution is added to every future multiple of $m$. A monotone
pointer in the completed endpoint array determines the next required
sequence value. This costs $O(M\log M)$ integer additions and $O(M)$
memory. The causal initialization is covered by
Proposition~\ref{prop:construction}.

The accompanying C++ implementation's recorded large run contains $M=10^7$ complete blocks,
with
\begin{equation}\label{eq:largest}
 F(10^7)=260\,033\,782\,794.
\end{equation}
Thus it represents the sequence through this position in compressed
block form. In particular,
\[
 A(10^7)=18\,810,\qquad A(10^{11})=5\,541\,027.
\]
Frequency and endpoint arithmetic used unsigned 64-bit integers with
explicit overflow checks. Decimal ratios were computed separately;
they do not enter the integer recurrence.

\begin{table}[ht]
\centering
\caption{Selected position values and comparison with the proved
leading term. The relative error is not monotone.}
\label{tab:growth}
\begin{tabular}{rrrr}
\toprule
$n$&$A(n)$&$A(n)/n^\alpha$&$A(n)/(Cn^\alpha)-1$\\
\midrule
$10^3$&62&0.867540430&$-0.017973995$\\
$10^4$&260&0.876672519&$-0.007636783$\\
$10^5$&1\,088&0.884013611&$+0.000673081$\\
$10^6$&4\,532&0.887331642&$+0.004428978$\\
$10^7$&18\,810&0.887464264&$+0.004579102$\\
$10^8$&77\,920&0.885884359&$+0.002790703$\\
$10^9$&322\,650&0.883946073&$+0.000596630$\\
$10^{10}$&1\,336\,601&0.882392675&$-0.001161763$\\
$10^{11}$&5\,541\,027&0.881487971&$-0.002185857$\\
\midrule
Limit&&0.883418998&0\\
\bottomrule
\end{tabular}
\end{table}

\begin{table}[ht]
\centering
\caption{Empirical moments of $R_m=B(m)/(Cm^\alpha)$ under uniform
sampling of block labels. These finite values need not approach their
limits monotonically.}
\label{tab:moments}
\begin{tabular}{rrr}
\toprule
$M$&$M^{-1}\sum_{m\le M}R_m$&$M^{-1}\sum_{m\le M}R_m^2$\\
\midrule
$10^3$&2.242850506&6.172339582\\
$10^4$&2.222963938&6.181730594\\
$10^5$&2.228275056&6.248626174\\
$10^6$&2.239666494&6.327784209\\
$10^7$&2.246219532&6.373906040\\
\midrule
Limit&2.238334344&6.335393176\\
\bottomrule
\end{tabular}
\end{table}

A separately implemented Python checker builds the sequence by literal block
expansion and recomputes divisor sums by trial division. It agrees
with the sieve on all triples $(A(m),B(m),F(m))$ through $m=100\,000$.
It also checks the literal nested recurrence \eqref{eq:nested}
through position $16\,384$, M\"obius inversion through $100\,000$,
and the floor-sum identity at the recorded test points.
The mathematical proofs do not depend on any finite test.

For this version, $100\,000$ blocks were regenerated from the C++ source.
Every resulting integer triple agreed with the packaged data, and both
Python checkers were rerun successfully on the fresh output. The
$10^7$-block figures above belong to the recorded large run; that full
run was not repeated in this revision. The bounded reproduction used
GCC/MinGW~15.2.0 in C++17 mode, CPython~3.14.4, and
\texttt{mpmath}~1.3.0 at 70-digit working precision on 64-bit Windows~11.

A supplementary checker, \texttt{verify\_new\_properties.py}, confirms the
prime criterion for every $2\le m\le100\,000$, strict divisibility-chain
inequalities for all $1\,066\,750$ proper-divisibility pairs in that range,
and convergence of each of the $99\,999$ finite orbits starting at
$2,\ldots,100\,000$ to $2$. It also checks the exact divisor-layer total
against $F(M)$ at $M=10^3,10^4,10^5$ and records the diagnostics in
Table~\ref{tab:newdiagnostics}.

\begin{table}[ht]
\centering
\caption{Diagnostics for the additional theorems, using $M=100\,000$
blocks and $N=F(M)=149\,829\,549$ positions. The adjacent cross moment
averages $R_mR_{m+1}$ for $1\le m<M$. The last column contains the proved
limits; these finite comparisons are not error estimates.}
\label{tab:newdiagnostics}
\begin{tabular}{lrr}
\toprule
Quantity & Finite value & Limit\\
\midrule
Even-value position proportion & 0.662536593 & 0.662889556\\
Adjacent cross moment & 4.279778413 & 4.314501743\\
$\mathcal P_A(N)\log N/N$ & 0.776484010 & 0.722874129\\
$\Prob(Q_M=1)$ & 0.447931996 & 0.446760781\\
\bottomrule
\end{tabular}
\end{table}

\paragraph{Verification package and availability.}
The verification code and data accompany the preprint as a separate
reproducibility package, identified by Zenodo DOI
\href{https://doi.org/10.5281/zenodo.22999299}{\nolinkurl{10.5281/zenodo.22999299}}
\cite{zenodoverification}.
The package contains \texttt{generate.cpp}, \texttt{verify.py},
\texttt{verify\_new\_properties.py}, the generated JSON reports,
$10\,000$-term lists for the three sequences, and the recorded integer
check data through the scopes stated above. Its README gives the
compiler flags, Python commands, and tested software versions; the
machine-readable reproduction report records the scope and file hashes.
The arXiv source bundle is separate and contains only the manuscript
sources, with an inline bibliography.

\section*{AI-Disclosure}
OpenAI's ChatGPT, accessed through Codex, assisted with mathematical
exploration, the drafting and revision of proofs, literature searches,
language editing, LaTeX preparation, and the development of verification
programs. Responsibility for the mathematical claims, references, and
final presentation rests with the author. AI-assisted checks and finite
computations do not constitute independent peer review.

\end{document}